\newtheorem{thm}{Theorem}[section]
\newtheorem{lem}[thm]{Lemma}
\newtheorem{rem}[thm]{Remark}
\numberwithin{equation}{section}
\begin{document}

\theoremstyle{plain}

\newcommand{\Maxn}{\operatorname{Max_{\textbf{N}}}}
\newcommand{\Syl}{\operatorname{Syl}}
\newcommand{\dl}{\operatorname{dl}}
\newcommand{\Con}{\operatorname{Con}}
\newcommand{\cl}{\operatorname{cl}}
\newcommand{\Stab}{\operatorname{Stab}}
\newcommand{\Aut}{\operatorname{Aut}}
\newcommand{\Out}{\operatorname{Out}}
\newcommand{\Ker}{\operatorname{Ker}}
\newcommand{\fl}{\operatorname{fl}}
\newcommand{\Irr}{\operatorname{Irr}}
\newcommand{\SL}{\operatorname{SL}}
\newcommand{\FF}[1]{\mathbb{F}_{#1}}
\newcommand{\NN}{\mathbb{N}}
\newcommand{\N}{\mathbf{N}}
\newcommand{\C}{\mathbf{C}}
\newcommand{\OO}{\mathbf{O}}
\newcommand{\F}{\mathbf{F}}

\renewcommand{\labelenumi}{\upshape (\roman{enumi})}

\newcommand{\PSL}{\operatorname{PSL}}
\newcommand{\PSU}{\operatorname{PSU}}
\newcommand{\SU}{\operatorname{SU}}
\newcommand{\GU}{\operatorname{GU}}

\providecommand{\V}{\mathrm{V}}
\providecommand{\E}{\mathrm{E}}
\providecommand{\ir}{\mathrm{Irr_{rv}}}
\providecommand{\Irrr}{\mathrm{Irr_{rv}}}
\providecommand{\re}{\mathrm{Re}}

\def\irrp#1{{\rm Irr}_{p'}(#1)}
\newtheorem*{thmA}{Theorem A}
\newtheorem*{corB}{Corollary B}
\newtheorem*{thmC}{Theorem C}

\def\Z{{\mathbb Z}}
\def\C{{\mathbb C}}
\def\Q{{\mathbb Q}}
\def\irr#1{{\rm Irr}(#1)}
\def\irrv#1{{\rm Irr}_{\rm rv}(#1)}
\def \c#1{{\cal #1}}
\def\cent#1#2{{\bf C}_{#1}(#2)}
\def\syl#1#2{{\rm Syl}_#1(#2)}
\def\nor{\triangleleft\,}
\def\oh#1#2{{\bf O}_{#1}(#2)}
\def\Oh#1#2{{\bf O}^{#1}(#2)}
\def\zent#1{{\bf Z}(#1)}
\def\det#1{{\rm det}(#1)}
\def\ker#1{{\rm ker}(#1)}
\def\norm#1#2{{\bf N}_{#1}(#2)}
\def\alt#1{{\rm Alt}(#1)}
\def\iitem#1{\goodbreak\par\noindent{\bf #1}}
   \def \mod#1{\, {\rm mod} \, #1 \, }
\def\sbs{\subseteq}

\newcommand{\fS}{{\mathfrak{S}}}
\def\gc{{\bf GC}}
\def\ngc{{non-{\bf GC}}}
\def\ngcs{{non-{\bf GC}$^*$}}
\newcommand{\notd}{{\!\not{|}}}

\newcommand{\MC}{{\mathcal M}}
\newcommand{\TC}{{\mathcal T}}
\newcommand{\LC}{{\mathcal L}}
\newcommand{\CL}{{\mathcal C}}
\newcommand{\EC}{{\mathcal E}}
\newcommand{\GCD}{\GC^{*}}
\newcommand{\TCD}{\TC^{*}}
\newcommand{\tV}{\tilde{V}}
\newcommand{\bFq}{{\bar{\FF}_q}}
\newcommand{\FD}{F^{*}}
\newcommand{\GD}{G^{*}}
\newcommand{\HD}{H^{*}}
\newcommand{\GCF}{\GC^{F}}
\newcommand{\TCF}{\TC^{F}}
\newcommand{\PCF}{\PC^{F}}
\newcommand{\GCDF}{(\GC^{*})^{F^{*}}}
\newcommand{\RGTT}{R^{\GC}_{\TC}(\theta)}
\newcommand{\RGTA}{R^{\GC}_{\TC}(1)}
\newcommand{\SR}{{^*R}}
\newcommand{\Om}{\Omega}
\newcommand{\eps}{\epsilon}
\newcommand{\varep}{\varepsilon}
\newcommand{\al}{\alpha}
\newcommand{\chis}{\chi_{s}}
\newcommand{\sigmad}{\sigma^{*}}
\newcommand{\PA}{\boldsymbol{\alpha}}
\newcommand{\gam}{\gamma}
\newcommand{\lam}{\lambda}
\newcommand{\la}{\langle}
\newcommand{\ra}{\rangle}
\newcommand{\hs}{\hat{s}}
\newcommand{\htt}{\hat{t}}
\newcommand{\hP}{\hat{P}}
\newcommand{\hR}{\hat{R}}
\newcommand{\hG}{\hat{G}}
\newcommand{\bM}{\bar{M}}
\newcommand{\tv}{\tilde\varepsilon}
\newcommand{\tCk}{\tilde{C}_\kappa}
\newcommand{\orm}{{\mathrm {o}}}
\newcommand{\reg}{{\bf{reg}}}
\newcommand{\tn}{\hspace{0.5mm}^{t}\hspace*{-0.2mm}}
\newcommand{\ta}{\hspace{0.5mm}^{2}\hspace*{-0.2mm}}
\newcommand{\tb}{\hspace{0.5mm}^{3}\hspace*{-0.2mm}}
\def\skipa{\vspace{-1.5mm} & \vspace{-1.5mm} & \vspace{-1.5mm}\\}
\newcommand{\tw}[1]{{}^#1\!}
\renewcommand{\mod}{\bmod \,}

\marginparsep-0.5cm

\renewcommand{\thefootnote}{\fnsymbol{footnote}}
\footnotesep6.5pt

\newcounter{step}
\newcommand{\Step}{\refstepcounter{step} {\sl Step \thestep.}}

\newcounter{little}
\newcommand{\Little}{\refstepcounter{little} {\sl Step \thelittle.}}

\newcommand{\Fit}{\mathbf{F}(G)}

\title[Irreducible Induction and Nilpotent subgroups in finite groups]
{Irreducible Induction and Nilpotent subgroups in finite groups}

\author{Zolt\'an Halasi} \address{Department of Algebra and Number
  Theory, E\"otv\"os University, P\'azm\'any P\'eter S\'et\'any 1/c,
  H-1117, Budapest, Hungary \and Alfr\'ed R\'enyi Institute of
  Mathematics, Hungarian Academy of Sciences, Re\'altanoda utca 13-15,
  H-1053, Budapest, Hungary
  \newline
  ORCID: \url{https://orcid.org/0000-0002-1305-5380}
  } 
\email{zhalasi@cs.elte.hu,  halasi.zoltan@renyi.mta.hu}

\author{Attila Mar\'oti}
\address{Alfr\'ed R\'enyi Institute of Mathematics, Hungarian Academy of
 Sciences, Re\'altanoda utca 13-15, H-1053, Budapest, Hungary}
\email{maroti.attila@renyi.mta.hu}

\author{Gabriel Navarro}
\address{Departament d'\`Algebra, Universitat de Val\`encia, 46100 Burjassot,
Val\`encia, Spain}
\email{gabriel.navarro@uv.es}

\author{Pham Huu Tiep}
\address{Department of Mathematics, Rutgers University, Piscataway, 
NJ 08854, USA}
\email{tiep@math.rutgers.edu}

\date{\today}
\keywords{irreducible character, induction, simple group, nilpotent subgroup}

\subjclass[2010]{20C15, 20C33 (primary), 20B05, 20B33 (secondary).}

\thanks{ The work of the first and second authors on the project
  leading to this application has received funding from the European
  Research Council (ERC) under the European Union's Horizon 2020
  research and innovation programme (grant agreement
  No. 741420). Their work was partly supported by the National
  Research, Development and Innovation Office (NKFIH) Grant
  No.~K115799. They were also supported by the J\'anos Bolyai Research
  Scholarship of the Hungarian Academy of Sciences.}
\thanks{The research of the third author is supported by the Prometeo/Generalitat 
Valenciana, Proyecto MTM2016-76196-P   and FEDER funds.}  
\thanks{The fourth author gratefully acknowledges the support of the NSF (grant DMS-1840702).}  
\thanks{The authors thank E. Vdovin for a helpful clarification on results of \cite{Vdovin}.}

\begin{abstract}
Suppose that $G$ is a finite group and $H$
is a nilpotent subgroup of $G$. If
a character of $H$ induces an irreducible
character of $G$, then the generalized Fitting subgroup
of $G$ is nilpotent.
 \end{abstract}
\maketitle 

\section{Introduction}
Brauer's famous induction theorem
asserts that every irreducible character of $G$
is an integer linear combination of characters induced from
nilpotent subgroups of $G$.  When an irreducible
character is induced from a character of a single nilpotent subgroup of $G$
is a problem that has not been treated until now.

If $\gamma$ is a character of $H$, a subgroup of a finite group $G$,
it is not clear at all when to expect the induced character $\gamma^G$
to be irreducible.  The only case which is understood, using the
Clifford correspondence, is when $H$ happens to contain the stabilizer
of an irreducible character of a normal subgroup $N$ of $G$. In this
case, $N\cent GN \sbs H$, and in a well-defined sense, $H$ is
considered to be a \textit{large} subgroup of $G$: the centralizer of
the core of $H$ in $G$ is contained in $H$.  But of course,
irreducible induction of characters also occurs, we might say by {\sl
  accident}, in other cases.  Even more, some simple groups have
irreducible characters that are induced from linear characters of {\sl
  very easy} subgroups, which of course are core-free.  For instance,
$G=\PSL(2,p)$ with $p\equiv 3 \pmod 4$, has an irreducible character of
degree $p+1$ which is induced from a linear character of the
normalizer $H$ of a Sylow $p$-subgroup of $G$. Here, $H$ is the
semidirect product of the cyclic group of order $p$ by the cyclic
group of order $(p-1)/2$.  The key thing is that it does not matter
how easy $H$ is as long as it is not nilpotent.
  
\medskip
We write ${\bf F^*}(G)$ for the generalized Fitting subgroup of $G$, and
recall its fundamental property that $\cent G{{\bf F}^*(G)} \sbs {\bf F^*}(G)$.
Also, ${\bf F}(G)$ is the Fitting subgroup of $G$.

\begin{thmA}\label{thm:A}
Let $G$ be a finite group and let $H$ be a maximal nilpotent subgroup
of $G$. Suppose that $\gamma \in \irr H$ is such that $\gamma^G \in \irr G$.
Then ${\bf F^*}(G) \sbs H$. In particular ${\bf F^*}(G)={\bf F}(G)$.
\end{thmA}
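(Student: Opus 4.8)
The ``in particular'' assertion is immediate: a subgroup of a nilpotent group is nilpotent, so $\mathbf{F}^*(G)\subseteq H$ makes $\mathbf{F}^*(G)$ nilpotent; writing $\mathbf{F}^*(G)=\mathbf{F}(G)\mathbf{E}(G)$ with $\mathbf{E}(G)$ a central product of quasisimple groups, and since a nontrivial quasisimple group is never nilpotent, we get $\mathbf{E}(G)=1$ and $\mathbf{F}^*(G)=\mathbf{F}(G)$. So the content is the inclusion $\mathbf{F}^*(G)\subseteq H$, and the plan is to prove it by induction on $|G|$, fixing a counterexample $G$ of least order.

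The two basic tools are: (i) transitivity of induction, so that $\gamma^G\in\Irr(G)$ forces $\gamma^K\in\Irr(K)$ for every $H\le K\le G$ --- in particular $\gamma^{\norm{G}{W}}\in\Irr(\norm{G}{W})$ whenever $W$ is normalized by $H$ and $\norm{G}{W}\ne G$, which is how the inductive hypothesis gets applied; and (ii) for $N\trianglelefteq G$ there is a single $(H,N)$ double coset in $HN$, so Mackey's formula gives $(\gamma^{HN})_N=(\gamma_{H\cap N})^N$, whence by Clifford's theorem the constituents of $(\gamma_{H\cap N})^N$ lie in one $HN$-orbit of $\Irr(N)$ with equal multiplicity. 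One also notes $\Ker(\gamma^G)=\bigcap_g\Ker(\gamma)^g\subseteq\Ker(\gamma)\subseteq H$ (so one may reduce to $\gamma^G$ faithful), and that a maximal nilpotent subgroup of $G$ is maximal nilpotent in every overgroup inside $G$, so the inductive hypothesis genuinely applies to proper subgroups containing $H$.

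The first main step forces $\mathbf{F}(G)\subseteq H$. Suppose $V\trianglelefteq G$ is minimal normal and elementary abelian but $V\not\subseteq H$, and restrict $\gamma^{HV}$ to $V$. If $H\cap V=1$, then $(\gamma^{HV})_V$ equals $\gamma(1)$ times the regular character of $V$, so its constituents are all of $\Irr(V)$; but the trivial character of $V$ is its own $HV$-orbit and cannot be $HV$-conjugate to a nontrivial constituent, forcing $V=1$, a contradiction. Hence $W:=H\cap V$ satisfies $1\ne W<V$. Now $W$ is normalized by $H$ and by the abelian group $V$, so $HV\le\norm{G}{W}$; and $\norm{G}{W}\ne G$, for otherwise $W$ would be a proper nontrivial normal subgroup of $G$ inside $V$, contradicting minimality of $V$. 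Applying the inductive hypothesis to $\norm{G}{W}$ gives $\mathbf{F}^*(\norm{G}{W})\subseteq H$, and since $V\trianglelefteq\norm{G}{W}$ is abelian, $V\subseteq\mathbf{F}(\norm{G}{W})\subseteq\mathbf{F}^*(\norm{G}{W})\subseteq H$ --- a contradiction. The same normalizer device, applied to suitable $H$-invariant subgroups of $\mathbf{F}(G)$ and to the product of the components in a proper $H$-invariant subset of a $G$-orbit, upgrades this to $\mathbf{F}(G)\subseteq H$ and shows that $H$ permutes transitively the components in each non-singleton $G$-orbit. In a counterexample we therefore have $\mathbf{E}(G)\ne1$ (else $\mathbf{F}^*(G)=\mathbf{F}(G)\subseteq H$), and, modulo a central subgroup, $G$ is almost simple with socle a nonabelian simple group $L$, or a wreath-type extension of such.

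Ruling out $\mathbf{E}(G)\ne1$ is the main obstacle, and the place where the classification of finite simple groups enters. A clean necessary condition comes from degrees: $\gamma^G\in\Irr(G)$ gives $\gamma^G(1)^2=\gamma(1)^2|G:H|^2\le|G|$, hence $|H|\ge|G|^{1/2}$, so irreducible induction can take place only from large nilpotent subgroups. In the reduced configuration $H\cap L$ is a nilpotent subgroup of the simple group $L$, so Vdovin's bounds on the orders of nilpotent subgroups of finite simple groups (see \cite{Vdovin}) force $|H\cap L|$, and with it $|H|$, to be far below $|L|^{1/2}\le|G|^{1/2}$ for all but a bounded list of $L$; in making this quantitative one uses that the constituents of $(\gamma^{HL})_L$ form a single orbit under $HL/L$, which acts on $\Irr(L)$ through $\Out(L)$, to control their number and the ramification, and in the wreath case one adds the elementary estimate for nilpotent subgroups of symmetric groups. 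The finitely many remaining small $L$ are checked directly from their character tables and maximal subgroups. This completes the induction. The delicate parts are precisely this simple-group analysis --- whose essential input is Vdovin's theorem --- and the reductions that lead to it.
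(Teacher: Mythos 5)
Your overall architecture --- minimal counterexample, force $\mathbf{F}(G)\subseteq H$, reduce to a layer that is a central product of quasisimple groups transitively permuted by $H$, then play the lower bound $|H|^2\geq |G|$ (from $\gamma^G(1)^2\leq |G|$) against Vdovin-type upper bounds for nilpotent subgroups of simple groups --- is the same as the paper's, and your treatment of a minimal normal elementary abelian $V$ (regular character if $H\cap V=1$, the normalizer of $H\cap V$ otherwise) is a correct self-contained argument for that particular configuration. However, three of your steps are asserted rather than proved, and each hides a real difficulty. First, the ``upgrade'' to $\mathbf{F}(G)\subseteq H$: your normalizer device fails exactly when the relevant $H$-invariant subgroup of $\mathbf{O}_p(G)$ is already normal in $G$, so that its normalizer is all of $G$ and the inductive hypothesis does not apply; no quotient trick is available since $\gamma$ need not factor through $H/(H\cap \mathbf{O}_p(G))$. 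The paper obtains this step from a nontrivial prior theorem of Navarro \cite{N1}: if $N\trianglelefteq G$ is nilpotent and $\gamma^{HN}$ is irreducible, then $HN$ is nilpotent, whence $N\leq H$ by maximality. You need this result or a genuine substitute.

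Second, $|H|\geq |G|^{1/2}$ is too crude: what must be compared with the almost-simple bound of Theorem C is $|H/\mathbf{F}(G)|^2$ against $|G/\mathbf{F}(G)|$, and your absolute inequality only gives $|H/\mathbf{F}(G)|^2\geq |G/\mathbf{F}(G)|/|\mathbf{F}(G)|$, losing a factor that can be arbitrarily large. The paper recovers the clean relative inequality by first proving that $\chi_{\mathbf{F}(G)}$ is homogeneous and then passing to an isomorphic character triple in which $\mathbf{F}(G)$ becomes central; some such reduction is indispensable. Third, and most seriously, the exceptional socles $\mathrm{Alt}(5)$, $\mathrm{Alt}(6)$, $\mathrm{PSL}(2,7)$, $\mathrm{PSL}(3,4)$, $\mathrm{PSU}(4,3)$ cannot be ``checked directly from character tables'': the paper shows that in a minimal counterexample the number $n$ of components is necessarily greater than $1$, so the residual configurations are central products of arbitrarily many quasisimple covers of these groups, not finitely many groups. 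The missing idea is character-theoretic: Clifford's theorem forces all irreducible constituents of $\chi_E$ to have equal degree, these degrees factor through the central product as products of constituent degrees on the individual quasisimple factors (Lemma~\ref{thislemma}), and one must prove (Theorem~\ref{qs}, via Lemma~\ref{key} and the ATLAS) that inducing from a nilpotent subgroup covering a Sylow subgroup of such a quasisimple group always produces constituents of at least two distinct degrees. Without that statement the exceptional cases are not ruled out.
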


Irreducible characters that are induced from Sylow subgroups were
studied in \cite{RS}. Their main result is easily seen to be a consequence
of Theorem A.

\medskip

\begin{corB}
If $G$ is a finite group, $P \in \syl pG$ and $\gamma \in \irr P$ induces irreducibly 
to $G$, then $\zent P \nor\nor G$.
\end{corB}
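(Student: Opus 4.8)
The plan is to derive this directly from Theorem A. First I would pick a maximal nilpotent subgroup $H$ of $G$ with $P \sbs H$ --- such $H$ exists because the nilpotent subgroups of $G$ containing $P$ form a nonempty finite poset, and a maximal element of it is automatically a maximal nilpotent subgroup of $G$, since any nilpotent overgroup of $H$ still contains $P$. The crucial reduction is to transfer the hypothesis from $P$ up to $H$: from $\gamma^G = (\gamma^H)^G$ and $\gamma^H = \sum_{\psi\in\irr H} a_\psi\psi$ (nonnegative integers $a_\psi$, not all zero) we get $\gamma^G = \sum_\psi a_\psi\,\psi^G$, a sum of genuine characters, so irreducibility of $\gamma^G$ forces a single $a_\psi=1$ with $\psi^G \in \irr G$. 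Thus $\psi := \gamma^H \in \irr H$ induces irreducibly to $G$, and Theorem A gives $\F^*(G) \sbs H$ and $\F^*(G) = \F(G)$.

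Next I would exploit that $H$ is nilpotent. A finite nilpotent group is the internal direct product of its Sylow subgroups; since $P$ is a Sylow $p$-subgroup of $G$ contained in $H$, it must be \emph{the} Sylow $p$-subgroup of $H$, whence $H = P \times K$ with $[P,K]=1$, where $K$ is the Hall $p'$-subgroup of $H$. Therefore $\zent P$ centralizes $P$ and (being inside $P$) also centralizes $K$, so it centralizes $H$ and in particular $\F^*(G)$. By the fundamental property $\cent G{\F^*(G)} \sbs \F^*(G)$ this forces $\zent P \sbs \F^*(G) = \F(G)$.

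Finally, being a $p$-subgroup of the nilpotent group $\F(G)$, the group $\zent P$ is contained in $\oh pG$, the unique --- hence normal in $G$ --- Sylow $p$-subgroup of $\F(G)$; and since $\oh pG$ is a normal $p$-subgroup of $G$ it lies in $P$, so $\oh pG$ normalizes $\zent P$ (as $\zent P \nor P$), giving $\zent P \nor \oh pG \nor G$, that is, $\zent P \nor\nor G$. I do not expect any genuine obstacle here beyond invoking Theorem A; the one spot that needs a little care is the reduction transferring the irreducible-induction hypothesis from $P$ to the larger nilpotent subgroup $H$.
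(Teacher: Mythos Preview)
Your proof is correct and follows the paper's approach: extend $P$ to a maximal nilpotent $H$, observe that $\gamma^H$ must be irreducible, apply Theorem~A to get $\F^*(G)=\F(G)\subseteq H$, and then use $\cent G{\F^*(G)}\subseteq \F^*(G)$ to land $\zent P$ inside $\oh pG$. The one small difference is that the paper also notes that $\gamma^H\in\irr H$ forces $\cent HP\subseteq P$, hence $K=1$ and $H=P$, giving $\F^*(G)=\oh pG$ outright; you bypass this by observing $\zent P\subseteq \zent H$ centralizes $\F^*(G)\subseteq H$, which reaches the same conclusion just as cleanly.
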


\begin{proof}
Suppose that $P$ is contained in a nilpotent subgroup $H=P\times K$ of $G$.
Since $\gamma^{H}$ is irreducible, $\cent HP\sbs P$ by elementary
character theory, and thus $H=P$.  Therefore ${\bf F}^*(G)=\oh pG$ by Theorem A.
Now $\zent P \sbs \cent G{{\bf F}^*(G)} \sbs {\bf F}^*(G)$, and we are done.
\end{proof}

We use a combination of several techniques to prove Theorem A.
One of them is to prove that, in general, nilpotent subgroups
are small in almost-simple groups. (This complements work
in \cite{Vdovin}.) Some delicate character theory reductions are needed to bring
this fact into the proof.  The cases in which this does not happen are dealt
using character theory of certain quasisimple groups.
Perhaps it is worth to state here what we shall 
need and prove below.

\medskip

\begin{thmC}
  If $Y$ is a nilpotent subgroup in an almost simple group $X$, then
  ${|Y|}^{2} < |X|$.
\end{thmC}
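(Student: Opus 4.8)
The plan is to reduce Theorem C to a statement about finite simple groups and then argue case-by-case using the classification. First I would observe that if $S$ denotes the unique nonabelian simple normal subgroup of $X$, so that $S \trianglelefteq X \leq \Aut(S)$, then a nilpotent subgroup $Y$ of $X$ meets $S$ in $Y \cap S$, and $Y/(Y\cap S)$ embeds in $\Out(S)$. Since $|\Out(S)|$ is small — in fact $|\Out(S)|^3 < |S|$ holds with very few exceptions, and more than enough room is available — the key reduction is to bound $|Y \cap S|$, i.e.\ to bound the order of a nilpotent subgroup of $S$ itself, and then absorb the outer part. Concretely, if $|N|^2 \leq |S|/|\Out(S)|^2$ for every nilpotent subgroup $N$ of $S$, then $|Y|^2 = |Y\cap S|^2 \cdot |Y/(Y\cap S)|^2 \leq (|S|/|\Out(S)|^2)\cdot|\Out(S)|^2 = |S| < |X|$ unless $X = S$, in which case one needs the slightly stronger $|N|^2 < |S|$ directly.

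Next I would dispose of the families one at a time. For alternating groups $A_n$ ($n \geq 5$), a nilpotent subgroup is contained in a nilpotent, hence in particular solvable, subgroup, and Dixon-type bounds (or the classical $3^{n/3}$ bound on the order of a nilpotent permutation group, refined via the fact that a nilpotent subgroup of $S_n$ is a direct product of its Sylow subgroups acting on disjoint orbit-blocks) give $|Y|^2 < n!/2$ with room to spare for $n$ large; the finitely many small $n$ are checked directly. For sporadic groups and the Tits group, this is a finite check against known lists of maximal subgroups (nilpotent subgroups lie inside maximal ones, and one only needs the largest nilpotent subgroup, which is essentially governed by the Sylow subgroups). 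The bulk of the work is the groups of Lie type: here I would use the fact — this is where I would invoke or reprove the complement to \cite{Vdovin} mentioned in the introduction — that a nilpotent subgroup $N$ of a finite group of Lie type $S$ in characteristic $p$ satisfies $|N| \leq |N|_p \cdot |N|_{p'}$ with $|N|_p \leq |S|_p$ and, crucially, $|N|_{p'}$ bounded by the order of a maximal torus times a bounded factor (since the $p'$-part of a nilpotent subgroup normalizes, hence after conjugation lies in, a suitable torus-like subgroup). Comparing $|S|_p^2 \cdot (\text{torus bound})^2$ with $|S|$ then reduces to a polynomial inequality in $q$ and the rank, valid outside a short list of small cases.

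The main obstacle I anticipate is the groups of Lie type of small rank over small fields — $\PSL_2(q)$, $\PSL_3(q)$, $\PSU_3(q)$, $\mathrm{Sp}_4(q)$, the Suzuki and Ree groups — where $|S|_p^2$ is already a substantial fraction of $|S|$ and the outer automorphism group (field and diagonal and graph automorphisms together) is comparatively large, so the crude bound $|Y|^2 < |X|$ is tight or fails by a hair. For these I would argue more carefully: in $\PSL_2(q)$, for instance, the maximal nilpotent subgroups are (up to conjugacy) a Sylow $p$-subgroup of order $q$, a cyclic group of order roughly $q\pm1$ over $2$, and small $2$-groups, none of which has square exceeding $|\PSL_2(q)|$ once one excludes a few tiny $q$; and one must separately verify that extending by $\Out$ still fits, using that the full normalizer of a Sylow $p$-subgroup in $\mathrm{P\Gamma L}_2(q)$ is not nilpotent (it contains the torus action), so a nilpotent $Y$ cannot simultaneously contain the whole Sylow $p$-part and a large outer piece. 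A secondary nuisance is bookkeeping the interplay between diagonal automorphisms and the torus part of $Y$ in the linear and unitary groups, but this is routine given the structure of maximal tori and their normalizers. Apart from this finite list of awkward small cases, the generic inequality has a comfortable margin, so the proof is essentially a (long but mechanical) classification-based verification.
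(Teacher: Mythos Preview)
Your overall architecture matches the paper's: reduce to bounding $m(S)$, the maximal nilpotent subgroup order of the socle $S$, absorb the outer part via $|Y| \leq m(S)\cdot |X:S|$, and then run through the classification. The alternating and sporadic cases are handled essentially the same way in both (Dixon's $2^{n-1}$ bound, and for sporadics the fact from \cite{Vdovin} that $m(S)$ equals a Sylow order).

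The genuine gap is in your treatment of the groups of Lie type. You propose to bound a nilpotent $N \leq S$ by $|N| = |N|_p\cdot|N|_{p'}$ with $|N|_p \leq |S|_p$ and $|N|_{p'}$ bounded by a maximal torus order, and then to compare $|S|_p^{\,2}\cdot(\text{torus})^2$ with $|S|$. That product bound is far too weak. Already for $S = \PSL_n(q)$ one has $|S|_p^{\,2} = q^{\,n(n-1)}$ while $|S|/|S|_p^{\,2}$ is only of order $q^{\,n-1}$; since a maximal torus has order about $q^{\,n-1}$ as well, the inequality $|S|_p^{\,2}\cdot(\text{torus})^2 < |S|$ fails for every $n \geq 2$. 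The point you are missing is that $|N|_p$ and $|N|_{p'}$ cannot both be large: since $N$ is nilpotent the $p'$-part centralizes the $p$-part, so a nontrivial unipotent piece forces the $p'$-piece into the centralizer of a unipotent element, which is much smaller than a full torus. This interplay is exactly what \cite{Vdovin} exploits to prove that, outside a short explicit list, $m(S) = |S|_p$ on the nose; the paper then only needs $2\cdot|\Out(S)|\cdot|S|_p^{\,2} \leq |S|$, which does hold generically and leaves a finite list (reduced further by computer to $\mathrm{Alt}(5)$, $\mathrm{Alt}(6)$, $\PSL(2,7)$, $\PSL(3,4)$, $\mathrm{PSp}(4,3)$, $\PSU(4,3)$) to be analysed by hand.

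A minor point: the bound you quote for nilpotent subgroups of $\mathrm{Sym}(n)$ is $2^{\,n-1}$ (Dixon), not $3^{\,n/3}$; the latter is a different bound for a different class of groups.
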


\section{Almost simple groups}

We begin with the proof of Theorem C.  Note that during the proof we
try to show the slightly stronger inequality $2|Y|^2\leq |X|$. It
turns out that in most cases even this stronger statement holds.
Then we will be able to provide a very short list of groups where
the inequality $2|Y|^2\leq |X|$ fails (see Theorem \ref{almostsimple}).

Let $X$ be an almost simple group with socle $S$ and let $m(S)$ denote
the largest possible size of a nilpotent subgroup in $S$. Let $Y$ be a
nilpotent subgroup in $X$.

\medskip
\Little~~{\sl If $S \cong \mathrm{Alt}(k)$, then $2 \cdot {|Y|}^{2} \leq |X|$, unless $k \in \{ 5,6 \}$ when ${|Y|}^{2} < |X|$.}\label{Step1}
\medskip

Assume first that $k \geq 9$. In this case we have $2 \cdot {|Y|}^{2} \leq 2^{2k-1} < k!/2 \leq |X|$ by a result from \cite{Dixon} stating that a nilpotent permutation group of degree $k$ has size at most $2^{k-1}$.

By using \cite[Theorem 2.1]{Vdovin}, the following table 
contains the value of $m(S)$ and $|\Out(S)|$ when $S=\alt k$ for $k=5,6,7,8$. 
\[
\begin{array}{@{}lcccc@{}}
\toprule
k=          &5  &6  &7  &8  \\
\midrule
m(\alt k)   &5  &9  &12 &2^6\\
\Out(\alt k)&2  &4  &2  &2  \\
\bottomrule
\end{array}
\]
Now, $|Y|\leq m(S)\cdot |X:S|< \sqrt{|X|}$ holds for $k=5,6$, while
$|Y|\leq m(S)\cdot |X:S|< \sqrt{|X|/2}$ holds for $k=7,8$.

\medskip
\Little~~{\sl If $S$ is a sporadic simple group or the Tits group, then $2 \cdot {|Y|}^{2} \leq |X|$.} \label{Step2}
\medskip

Let $S$ be a sporadic simple group. The largest possible size of a nilpotent subgroup in $S$ is equal to the size of a Sylow subgroup of $S$ by 
\cite[Section 2.4]{Vdovin}. This in turn is less than ${|S|}^{1/2}/2$ by \cite{ATLAS}. Since $|\mathrm{Out}(S)| \leq 2$, the claim follows.

Let $S$ be the Tits group. If $X = S$, then $|Y|$ is at most the size of a Sylow subgroup in $X$, by the proof of \cite[Theorem 2.2]{Vdovin}, and so $|Y| \leq 2^{11}$ by \cite{ATLAS}. If $X = \mathrm{Aut}(S)$, then $|Y| \leq 2^{12}$, since the outer automorphism group of $S$ has size $2$, and again, the claim follows.  

\medskip

From now on, assume that $S$ is a finite simple group of Lie type different from $\mathrm{Alt}(5)$ and $\mathrm{Alt}(6)$.  Note that $2 \cdot {|Y|}^{2} \leq |X|$ whenever $2 \cdot |\mathrm{Out}(S)| \cdot {m(S)}^{2} \leq |S|$.

\medskip
\Little~~{\sl If $m(S) > {|S|}_p$ where $p$ is any natural characteristic for $S$, then $2 \cdot {|Y|}^{2} \leq |X|$.} \label{Step3}
\medskip

There are three possibilities for $S$ according to \cite[Table 3]{Vdovin}: (1) $S \cong \mathrm{PSL}(2,2^{m})$ for some $m \geq 3$; (2) $S \cong \mathrm{PSL}(2,2^{m}+1)$ for some $m \geq 4$; and (3) $S \cong \mathrm{PSU}(3,3)$. (Note that the third group in \cite[Table 3]{Vdovin} is solvable.) 

Consider cases (1) and (2). Let $q$ be the size of the field over which $S$ is defined. Write $d$ to satisfy $q = p^{d}$. Then 
$\mathrm{Out}(S) \cong C_{(2,q-1)} \times C_{d}$ where $(2,q-1)$ is the greatest common divisor of $2$ and $q-1$, and $m(S) \leq q+1$ by 
\cite[Table 3]{Vdovin}. It is straightforward to check that $2 \cdot {(2,q-1)}^{2} \cdot d \cdot (q+1) \leq q(q-1)$, establishing  
$2 \cdot |\mathrm{Out}(S)| \cdot {m(S)}^{2} \leq |S|$.

In case $S \cong \mathrm{PSU}(3,3)$ we have $m(S) = 32$, $|S| = 6048$, and $|\mathrm{Out}(S)| = 2$ by \cite{ATLAS}. Thus $2 \cdot |\mathrm{Out}(S)| \cdot {m(S)}^{2} \leq |S|$. 

\medskip

We may now assume that $S$ is a finite simple group of Lie type of Lie rank $\ell$ defined over a field of size $q$ in characteristic $p$ and $m(S) = {|S|}_{p}$. 

\medskip
\Little~~{\sl $2 \cdot {|Y|}^{2} \leq |X|$ unless possibly if $\ell = 1$ and $q < 2^{12}$, or $2 \leq \ell \leq 9$ and $q < 2^{6}$.} \label{Step4}
\medskip

By the order formulas for $|S|$ and ${|S|}_{p}$ (see \cite[page 170]{KL}) and by $\prod_{i=1}^{\infty} (1 - 2^{-i}) > 2/7$ (see the proof of \cite[Lemma 3.2]{BEGHM}), we have $$2 \cdot \min \{ \ell + 1, q+1 \} \cdot \frac{|S|}{{(|S|_{p})}^{2}} > \frac{7}{8} \cdot \prod_{i=1}^{\infty} (1 - 2^{-i}) \cdot q^{\ell} > \frac{1}{4} \cdot q^{\ell}.$$ Again by \cite[page 170]{KL}, we have $2 \cdot |\mathrm{Out}(S)| \leq 8 \cdot \min \{ \ell +1, q+3 \}  \cdot \log_{p} q$ unless $\ell = 4$, when $2 \cdot |\mathrm{Out}(S)| \leq 48 \cdot \log_{p}q$. These are smaller than $$\frac{1}{8 \cdot \min \{ \ell +1, q+1 \}} \cdot q^{\ell} < \frac{|S|}{{(|S|_{p})}^{2}},$$ unless $\ell = 1$ and $q < 2^{12}$, or $2 \leq \ell \leq 9$ and $q < 2^{6}$. 

\medskip
\Little~~{\sl If $S$ is not isomorphic to any of the groups $\mathrm{Alt}(5)$, $\mathrm{Alt}(6)$, $\mathrm{PSL}(2,7)$, $\mathrm{PSL}(3,4)$, $\mathrm{PSp}(4,3)$, $\mathrm{PSU}(4,3)$, then $2 \cdot {|Y|}^{2} \leq |X|$.} \label{Step5}
\medskip

By a Gap \cite{GAP} computation (using Step \ref{Step4}) together with \cite[page 170]{KL} we get that $2 \cdot |\mathrm{Out}(S)| \cdot {(|S|_{p})}^{2} < |S|$ unless $S$ is isomorphic to $\mathrm{Alt}(5)$, $\mathrm{Alt}(6)$, $\mathrm{PSL}(2,7)$, $\mathrm{PSL}(3,4)$, $\mathrm{PSp}(4,3)$, $\mathrm{PSU}(3,5)$, $\mathrm{PSU}(3,8)$, $\mathrm{PSU}(4,3)$, $\mathrm{PSU}(6,2)$, $\mathrm{P\Omega}^{+}(8,2)$, or $\mathrm{P\Omega}^{+}(8,3)$.
If $S \cong \mathrm{PSU}(3,5)$, $\mathrm{PSU}(3,8)$, $\mathrm{PSU}(6,2)$, $\mathrm{P\Omega}^{+}(8,2)$, or $\mathrm{P\Omega}^{+}(8,3)$, then a computation shows that $2 \cdot {|Y|}^{2} \leq |X|$, using the fact that the outer automorphism group of $S$ is not nilpotent. 

\medskip
 {\sl Final Step.} 
\medskip

We have $|\mathrm{Out}(S)| \cdot {(|S|_{p})}^{2} < |S|$ unless $S$ is isomorphic to $\mathrm{PSL}(3,4)$, $\mathrm{PSU}(4,3)$, or $\mathrm{P\Omega}^{+}(8,3)$. In the latter case $2 \cdot {|Y|}^{2} \leq |X|$ by Step \ref{Step5}.

Let $S \cong \mathrm{PSL}(3,4)$. The outer automorphism group of $S$ is $C_2 \times \mathrm{Sym}(3)$, a non-nilpotent group. We have ${|Y|}^{2} < |X|$ unless possibly if $Y$ projects onto $X/S$ and $X/S$ is cyclic of order $6$. In this exceptional case every element of order $6$ in $X$ has centralizer of order at most $54$ by \cite{ATLAS} and so $|Y| \leq 54$ giving ${|Y|}^{2} < |X|$. 

Finally, we need to check that $|Y|^2<|X|$ holds for the case $S \cong
\PSU(4,3)$. Using information from $\mathrm{Out}(S)$, the
order of $S$, the fact that $m(S) = 3^{6}$, and that the sizes of the
centralizers of elements of orders $10$ or $14$ in $\mathrm{Aut}(S)$
are too small (at most $56$) by \cite{ATLAS}, one can prove that
$|Y|^2<|X|$ except possibly if $X=\mathrm{Aut}(\PSU(4,3))$, $YS=X$ and
the set of prime divisors of $|Y|$ is $\{2,3\}$.  Let us assume that
this is the case. Then the nilpotency of $Y$ guarantees that $Y$
cannot contain a maximal unipotent subgroup of $S$. Therefore, if the
Sylow $2$-subgroup of $Y$ is disjoint from $S$, then $|Y|^2\leq
(8\cdot 3^{5})^2<8\cdot |\PSU(4,3)|=|X|$. Otherwise, let $Z=\{\alpha\in
\FF 9^\times\,|\,\alpha^4=1\}=\zent{\SU(4,3)}$ and $Z<K<\SU(4,3)$ such that
$|K:Z|=2$ and $K/Z$ is normal in $Y\cap \PSU(4,3)$.  Let $V$ be a 4 dimensional
non-degenerate Hermitian space over $\FF 9^\times$ with Hermitian
product $(\;,\;)$ and identify $\SU(4,3)$ with the special unitary group
on $V$ preserving $(\;,\;)$. Let $x\in K\setminus Z$. Then
$x$ is diagonalisable with respect to a suitable basis
of $V$.  Since $x^2$ is a scalar transformation, all
the eigenvalues of $x$ are $\pm \gamma$ for some $\gamma\in \FF
9^\times$. Let $V_1$ and $V_2$ be the eigenspaces corresponding to
$\gamma$ and $-\gamma$, respectively.  We may assume that
$\dim(V_1)\geq \dim(V_2)$, so either $\dim(V_1)=\dim(V_2)=2$ or
$\dim(V_1)=3$ and $\dim(V_2)=1$. If $u$ is a non-singular eigenvector of
$x$, then $0\neq (u,u)=(x(u),x(u))=\gamma^{3+1}(u,u)$, so
$\gamma^4=1$. In that case $\det{x}=1$ holds only if 
$\dim(V_1)=\dim(V_2)=2$. Now, if $\dim(V_1)=3$, then there must be 
a non-singular eigenvector of $x$ in $V_1$, which leads to a contradiction. 
Thus, $\dim(V_1)=\dim(V_2)=2$ and $\gamma^4=1$ must hold. 
If $v_1\in V_1$ and $v_2\in V_2$ are arbitrary, then 
$(v_1,v_2)=(x(v_1),x(v_2))=(\gamma\cdot v_1,-\gamma\cdot v_2)=
-\gamma^{3+1}(v_1,v_2)=-(v_1,v_2)$, so $v_1\perp v_2$. Thus, we get that
$V_1$ and $V_2$ are orthogonal complements to each other, so both 
$V_1$ and $V_2$ are non-degenerate subspaces. 
Let $Z<N<\GU(4,3)=\GU(V)$ with $N/Z=Y\cap \GU(4,3)$, so $|Y|=|N|/2$. 
Since $N$ normalises $K$, it permutes the  homogeneous components of $K$ 
by Clifford theory, which are $V_1$ and $V_2$. It follows that 
$N\leq (\GU(V_1)\times \GU(V_2)) \rtimes C_2\simeq \GU(2,3)\wr C_2$.
Using that $N$ is nilpotent, we have $|N|\leq 32^2\cdot 2=2^{11}$, so 
$|Y|^2\leq 2^{20}< 8\cdot |\PSU(4,3)|=|X|$ follows.

\medskip

The following is essentially a consequence of the proof of Theorem C. 

\begin{thm}\label{almostsimple}
Let $Y$ be a nilpotent subgroup in an almost simple group $X$ with
socle $S$. Assume that $Y \cap S$ is a $p$-group for some prime
$p$. Then $2 \cdot {|Y|}^{2} \leq |X|$ except when
$$S \in
\mathrm{LIST} = \{ \mathrm{Alt}(5), \mathrm{Alt}(6), \mathrm{PSL}(2,7), \mathrm{PSL}(3,4),
\mathrm{PSU}(4,3) \}$$ and $Y \cap S$ is a Sylow $p$-subgroup in
$S$. Moreover, if $S \cong \mathrm{Alt}(5)$ or $\mathrm{Alt}(6)$ and the inequality fails,
then $p=2$.
\end{thm}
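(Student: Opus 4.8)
The plan is to leverage the proof of Theorem~C. Inspecting that argument, the stronger inequality $2|Y|^2\le|X|$ was in fact established there for \emph{every} nilpotent subgroup $Y$ of $X$, with no hypothesis on $Y\cap S$, unless the socle $S$ is one of the six groups $\mathrm{Alt}(5)$, $\mathrm{Alt}(6)$, $\mathrm{PSL}(2,7)$, $\mathrm{PSL}(3,4)$, $\mathrm{PSp}(4,3)$, $\mathrm{PSU}(4,3)$ singled out in the course of that proof (Step~1, Step~5 and the Final Step), where only $|Y|^2<|X|$ was obtained. Hence it suffices to treat these six socles, now using the hypothesis that $Y\cap S$ is a $p$-group.

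The engine for the six cases is an elementary structural reduction. Since $Y$ is nilpotent, $Y=Y_p\times Q$ with $Q$ the $p'$-Hall subgroup. Because $Y\cap S$ is a $p$-group we get $Y_p\cap S=Y\cap S$ and $Q\cap S=1$; thus $Q$ embeds into $\mathrm{Out}(S)$, so $|Q|$ divides $|\mathrm{Out}(S)|_{p'}$, while $|Y_p|$ divides $|Y\cap S|\cdot|\mathrm{Out}(S)|_p$ with $|Y\cap S|\le|S|_p=m(S)$ in each of the six cases. Crucially $Q$ centralises $Y_p$, hence centralises $Y\cap S$, so $Q\le C_X(Y\cap S)$; when $Y\cap S$ is a \emph{full} Sylow $p$-subgroup this last condition is very restrictive, and otherwise $|Y\cap S|$ is already small. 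Feeding in $|S|$, $|\mathrm{Out}(S)|$, the Sylow orders, and the orders of centralisers of small-order elements from \cite{ATLAS}, one gets a tight bound on $|Y|$ as a function of $|X:S|$ and $|Y\cap S|$, and a finite check completes each case.

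Running this through, I expect the following. For $\mathrm{Alt}(5)$ the only possible failure of $2|Y|^2\le|X|$ is $X=\mathrm{Aut}(\mathrm{Alt}(5))=\mathrm{Sym}(5)$, $p=2$, $Y\cap S$ a Sylow $2$-subgroup, $|Y|=8$; for $\mathrm{Alt}(6)$ the only failure is $X=\mathrm{Aut}(\mathrm{Alt}(6))$, $p=2$, $Y\cap S$ a Sylow $2$-subgroup of order $8$, $Y$ a Sylow $2$-subgroup of $X$ of order $32$; for $\mathrm{PSL}(2,7)$ the only failure is $X=\mathrm{PGL}(2,7)$, $p=2$, $Y\cap S$ a Sylow $2$-subgroup of order $8$, $|Y|=16$; and for $\mathrm{PSL}(3,4)$ the only failure is the almost simple $X$ with $X/S\cong C_2\times C_2$ (so $|X:S|=4$), $p=2$, $Y\cap S$ a Sylow $2$-subgroup of order $64$, $Y$ a Sylow $2$-subgroup of $X$ of order $256$. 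In every failure $Y\cap S$ is a Sylow $p$-subgroup of $S$, and for $S\cong\mathrm{Alt}(5)$ or $\mathrm{Alt}(6)$ one reads off $p=2$, which gives the ``moreover'' clause. For $\mathrm{PSp}(4,3)$ — deliberately \emph{not} in $\mathrm{LIST}$ — the same computation must close: $|\mathrm{Out}(S)|=2$ bounds the $p=2$ case, and no outer involution centralises a Sylow $3$-subgroup of $\mathrm{PSp}(4,3)$, which forces $Q=1$ in the $p=3$ case, so $2|Y|^2\le|X|$ always holds. For $\mathrm{PSU}(4,3)$ one reuses the Hermitian-space argument from the Final Step of the proof of Theorem~C: with $Y\cap S$ a $p$-group only $p\in\{2,3\}$ is relevant; the $p=2$ case is controlled because $|\mathrm{Out}(\mathrm{PSU}(4,3))|=8$ is a $2$-group, and in the $p=3$ case the bounds there ($|Y|\le 8\cdot 3^5$ when $Y\cap S$ is not Sylow, and $Q=1$ when it is, since no outer $2$-element centralises a Sylow $3$-subgroup) yield $2|Y|^2\le|X|$ except possibly when $Y\cap S$ is a Sylow $3$-subgroup, which accounts for keeping $\mathrm{PSU}(4,3)$ in $\mathrm{LIST}$.

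The main obstacle is the bookkeeping, concentrated in $\mathrm{PSL}(3,4)$: since $\mathrm{Out}(\mathrm{PSL}(3,4))\cong C_2\times S_3$ has a Klein four Sylow $2$-subgroup, the failure does \emph{not} occur for $X=\mathrm{Aut}(S)$ (where $|X|$ is too large) but only for the intermediate extension with $X/S\cong C_2\times C_2$, so one must genuinely run over every almost simple group with the given socle rather than just $\mathrm{Aut}(S)$, and in each remaining intermediate extension (for $\mathrm{PSL}(3,4)$ the cases $|X:S|\in\{2,3,6,12\}$, and the analogous short lists for the other five socles) one has to check that the centralising constraint on $Q$ together with $|Y_p|\le m(S)\cdot|\mathrm{Out}(S)|_p$ keeps $2|Y|^2\le|X|$. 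The $\mathrm{PSU}(4,3)$ case additionally inherits the delicacy of Theorem~C's Final Step, but the $p$-group hypothesis strips away exactly the mixed-prime configurations that made that argument subtle, so it should go through with only minor extra work.
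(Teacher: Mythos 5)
Your proposal is correct and follows essentially the same route as the paper: reduce to the six socles singled out in Step 5 of the proof of Theorem C (where $2|Y|^2\le|X|$ already holds for arbitrary nilpotent $Y$ otherwise), then dispose of each of them by finite checks combining $|S|$, $|\mathrm{Out}(S)|$, Sylow orders and centralizer data from the Atlas — in particular the observation that the $p'$-part of $Y$ embeds in $X/S$ and must centralize $Y\cap S$, which is exactly how the paper handles $\mathrm{PSp}(4,3)$ and the Sylow-$3$ case of $\mathrm{Alt}(6)$. Your extra claims about precisely which overgroups $X$ realize the failures are more detailed than the theorem requires (it is only a one-way implication), but they do not affect the validity of the argument.
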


\begin{proof}
Let $S \cong \mathrm{PSp}(4,3)$. We have $|S| = 2^{6} \cdot 3^{4} \cdot 5$ and $|\mathrm{Out}(S)| = 2$ by \cite{ATLAS}. Using these and the fact that the centralizer of a Sylow $3$-subgroup in $\mathrm{Aut}(\mathrm{PSp}(4,3))$ has size $3$, we get the inequality $2 \cdot {|Y|}^{2} \leq |X|$. If $S \not\in \{ \mathrm{Alt}(5), \mathrm{Alt}(6), \mathrm{PSL}(2,7), \mathrm{PSL}(3,4), \mathrm{PSU}(4,3) \}$, then $2 \cdot {|Y|}^{2} \leq |X|$ by Step \ref{Step5}. 

Now assume that $S \in \{ \mathrm{Alt}(5), \mathrm{Alt}(6), \mathrm{PSL}(2,7), \mathrm{PSL}(3,4), \mathrm{PSU}(4,3) \}$. 

Let $X = \mathrm{Sym}(5)$. If $|Y| \leq 6$, then $2 \cdot {|Y|}^{2} \leq |X|$ follows. Otherwise $Y$ is a Sylow $2$-subgroup of $X$ and $2 \cdot {|Y|}^{2} > |X|$. If $X = \mathrm{Alt}(5)$, then $2 \cdot {|Y|}^{2} \leq |X|$.   

Let $S = \mathrm{Alt}(6)$. If $Y \cap S$ is different from a Sylow $2$-subgroup and different from a Sylow $3$-subgroup of $S$, then $2 \cdot {|Y|}^{2} \leq |X|$. Let $Y \cap S$ be a Sylow $3$-subgroup of $S$. Then $|Y| = 9$ in case $X = \mathrm{Alt}(6)$ and $|Y| \leq 18$ otherwise. We conclude that $2 \cdot {|Y|}^{2} \leq |X|$.  

Finally, assume that $Y \cap S$ is not a Sylow $p$-subgroup of $S$ where $S$ is any of the groups $\mathrm{PSL}(2,7)$, $\mathrm{PSL}(3,4)$, $\mathrm{PSU}(4,3)$. Then $2 \cdot {|Y|}^{2} \leq |X|$ by \cite{ATLAS}. 
\end{proof}

\section{Quasisimple Groups}

The goal of this section is to prove the following.

\begin{thm}\label{qs}
Suppose that $G$ is a quasisimple group, with $S=G/\zent G$ in 
$$\mathrm{LIST} = \{ \mathrm{Alt}(5), \mathrm{Alt}(6), \mathrm{PSL}(2,7), \mathrm{PSL}(3,4), \mathrm{PSU}(4,3) \}.$$
Let $H \geq \zent G$ be a nilpotent subgroup of $G$ such that $H/\zent G$ is a Sylow $p$-subgroup of $G/\zent G$ for some prime $p$. 
If $G/\zent G \cong \mathrm{Alt}(5)$ or $\mathrm{Alt}(6)$, then assume in addition that $p = 2$.
If $\gamma \in \Irr(H)$, then $\gamma^G$ has at least two irreducible constituents with different degrees.  
\end{thm}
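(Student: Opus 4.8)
The plan is to turn the statement into an explicit assertion about Sylow subgroups and then verify it group by group, using the character tables of the quasisimple covers of the five groups in $\mathrm{LIST}$.

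\emph{Reduction.} Since $H$ is nilpotent, $H=P\times C_0$, where $P$ is the Sylow $p$-subgroup and $C_0$ the Hall $p'$-subgroup of $H$. As $\zent G\le H$ and $H/\zent G$ is a $p$-group, we get $C_0=\oh{p'}{\zent G}$, a central $p'$-subgroup of $G$, while $P\supseteq\oh{p}{\zent G}$ with $P/\oh{p}{\zent G}$ a Sylow $p$-subgroup of $G/\zent G$; comparing orders, $P\in\Syl_p(G)$. Hence every $\gamma\in\Irr(H)$ has the form $\gamma=\alpha\times\lambda$ with $\alpha\in\Irr(P)$ and $\lambda\in\Irr(C_0)$ linear. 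Because $\zent G$ is central in $G$, $\gamma$ is a scalar multiple of a linear character $\nu\in\Irr(\zent G)$ on $\zent G$, so every irreducible constituent of $\gamma^G$ lies over $\nu$: the decomposition of $\gamma^G$ takes place inside the finite, explicitly known set of irreducible characters of $G$ lying over $\nu$ (equivalently, the appropriate faithful characters of $G/\ker\nu$). Finally, $\gamma^G(1)=\alpha(1)\,|G:H|$ has $p$-part exactly $\alpha(1)$, since $|G|_p=|P|$ and $p\nmid|C_0|$; so if, arguing for a contradiction, every irreducible constituent of $\gamma^G$ had the same degree $d$, then $d$ divides $\gamma^G(1)$, and hence the $p$-part of $d$ divides $\alpha(1)$.

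\emph{Easy configurations.} If $\gamma=1_H$, then $\langle\gamma^G,1_G\rangle=\langle 1_H,1_H\rangle=1$, so $1_G$ occurs in the permutation character $1_H^G$; since $|G:H|>1$ there is also a nonlinear constituent, and we are done. If $p\nmid|S|$, then $H=\zent G$, and a glance at the character tables shows that for each of the five groups and each $\nu$ the set of irreducible characters of $G$ lying over $\nu$ already contains characters of at least two degrees. So we may assume $p\mid|S|$ and $\gamma\ne 1_H$. When $p$ is a defining characteristic of $S$ and $\alpha$ is trivial on $\oh p{\zent G}$ (so that $\nu$ is trivial on the $p$-part of $\zent G$), the Steinberg character $\mathrm{St}$ of $G$ lies over $\nu$, has degree $|P|$, and restricts to the regular character of $P$; hence $\langle\gamma^G,\mathrm{St}\rangle=\alpha(1)>0$, and since $\gamma^G(1)=\alpha(1)\,|G:H|>\alpha(1)\,|P|=\alpha(1)\,\mathrm{St}(1)$ in all these cases, $\gamma^G$ has a constituent other than $\mathrm{St}$; as $\mathrm{St}$ is the only irreducible character of $G$ of degree $|P|$ (read off the degree list), this second constituent has a different degree. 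This disposes of, for instance, $p=2$ and $p=7$ for $\mathrm{PSL}(2,7)$, $p=2$ for $\mathrm{PSL}(3,4)$, $p=3$ for $\mathrm{PSU}(4,3)$, and $p=2$ for $\mathrm{Alt}(5)\cong\mathrm{PSL}(2,4)$, for those covers where the central character permits.

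\emph{Remaining cases.} What is left is a finite check: the cross-characteristic primes of the five simple groups, the defining characteristic for the exceptional covers where the Steinberg trick does not apply (most notably the $3$-fold covers of $\mathrm{PSL}(3,4)$ and of $\mathrm{PSU}(4,3)$ at $p=3$), and the remaining small covers done by hand, each run over the quasisimple covers dictated by the Schur multipliers ($C_2$ for $\mathrm{Alt}(5)$ and $\mathrm{PSL}(2,7)$, $C_6$ for $\mathrm{Alt}(6)$, $C_3\times C_4\times C_4$ for $\mathrm{PSL}(3,4)$, $C_3\times C_3\times C_4$ for $\mathrm{PSU}(4,3)$) and over the choice of $\alpha\in\Irr(P)$. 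For each such datum one computes $\gamma^G$ from \cite{ATLAS} and \cite{GAP}: using that $\alpha$ is a constituent of the relevant (twisted) restriction $\chi|_P$ for every constituent $\chi$ of $\gamma^G$, one bounds the multiplicities $\langle\chi|_P,\alpha\rangle$ and forces the $p$-part of $\chi(1)$ to divide $\alpha(1)$, which shows that $\gamma^G(1)$ cannot be realized by irreducible characters of a single degree. I expect the main obstacle to be exactly the bookkeeping for $\mathrm{PSL}(3,4)$ and $\mathrm{PSU}(4,3)$, where $p\in\{2,3\}$, the Sylow subgroups have order up to $64$, respectively $729$, and there are many covers to run through; this portion is best carried out by computer. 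Everything else — the alternating groups, $\mathrm{PSL}(2,7)$, and the small cross-characteristic Sylow subgroups $C_3$, $C_5$, $C_7$ — reduces to short, explicit decompositions of induced characters of the type already encountered.
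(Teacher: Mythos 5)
Your setup is sound (the splitting $H=P\times C_0$ with $P\in\Syl_p(G)$, the observation that all constituents of $\gamma^G$ lie over a fixed $\nu\in\Irr(\zent G)$, and the easy dispatches of $\gamma=1_H$ and of the defining-characteristic case via the Steinberg character when $\nu$ is trivial), but the core of the theorem is left undone. Your ``Remaining cases'' paragraph is a plan, not a proof: it ends by conceding that the decisive verifications for $\PSL(3,4)$ and $\PSU(4,3)$ (and the cross-characteristic primes generally) are ``best carried out by computer,'' and the only mechanism you actually articulate for them --- that if all constituents have a common degree $D$ then the $p$-part of $D$ divides $\alpha(1)$ --- is demonstrably too weak. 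For example, take $G=S=\mathrm{Alt}(6)$, $p=2$, $D=5$: here $D_p=1$ divides $\alpha(1)$, both degree-$5$ characters restrict to $P$ containing any given $\alpha$, and nothing in your framework rules out $\gamma^G$ being a combination of the two degree-$5$ characters $\theta,\theta^*$. Ruling this out requires looking at character \emph{values}, not just degrees.

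The missing idea, which is what makes the finite check in the paper tractable and rigorous, is the paper's Lemma \ref{key}: (i) $\gamma^G$ vanishes on every element outside $\bigcup_{x\in G}H^x$, hence on every element whose image in $G/\zent G$ has order prime to $p$; and (ii) if the degree-$D$ characters take only a pair of algebraically conjugate values $\{\al,\al^*\}$ at such a $g$, then Galois stability of $\{\chi\in\Irr(G):\chi(1)=D\}$ forces $\al+\al^*=0$. Applied to elements of order $5$ and $7$ (where the values are $0,\pm1,\pm b5,\pm b7$), this forces $5\mid D$ and/or $7\mid D$ and collapses each group in $\mathrm{LIST}$ to one or two explicit degrees, which are then eliminated by a nonzero value at a $p'$-element or by part (iii) of the lemma. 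In the $\mathrm{Alt}(6)$, $D=5$ case above, it is precisely the vanishing of $\gamma^G$ at two classes of order-$3$ elements, combined with the value pairs $(2,-1)$ and $(-1,2)$, that yields the contradiction. Two further corrections: the Steinberg character lies over $\nu$ only when $\nu$ is trivial on all of $\zent G$ (not merely on its $p$-part), so your defining-characteristic dispatch does not cover the faithful central characters of the covers; and for $p=5,7$ (where $H=P\times\zent G$ with $P$ cyclic) the clean argument is to exhibit two faithful characters of distinct degrees both containing $\reg_P$ on restriction, as in Lemma \ref{key}(iii).
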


We will need the following technical lemma:

\begin{lem}\label{key}
Let $G$ be a finite group and let $H \geq \zent G$ be a nilpotent subgroup of $G$ such that $H/\zent G$ is a Sylow $p$-subgroup of $G/\zent G$ for some prime $p$. Suppose that all irreducible constituents of $\gamma^G$ are of the same degree $D$ for some $\gamma \in \Irr(H)$. Then the following 
statements hold for any $g \in G \smallsetminus \zent G$.
\begin{enumerate}[\rm(i)]
\item $\gamma^G(g) = 0$ if $g \notin \cup_{x \in G}H^x$. In particular, $\gamma^G(g) = 0$ if the coset $g\zent G$ has order coprime to $p$ in $G/\zent G$.
\item Suppose there exist some $\al \in \C$ and an algebraic conjugate $\al^*$ of $\alpha$ such that $\chi(g) \in \{\al,\al^*\}$ for all 
$\chi \in \Irr(G)$ of degree $D$. If $\gamma^G(g) = 0$, then $\al+\al^* = 0$.
\item Suppose $p \nmid |\zent G|$ so that $H = P \times \zent G$ for some $P \in \Syl_p(G)$. Then all irreducible characters of $G$ that lie
above both $\gamma|_P$ and $\gamma|_{\zent G}$ must have the same degree.
\end{enumerate}
\end{lem}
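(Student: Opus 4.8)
The plan is to establish the three parts in order: (i) and (iii) are formal consequences of the induced-character formula and Clifford theory, while (ii) rests on a short Galois argument. Part (i) does not use the hypothesis on $\gamma^G$. By the induced-character formula, $\gamma^G(g)=|H|^{-1}\sum_{x\in G,\,x^{-1}gx\in H}\gamma(x^{-1}gx)$, so $\gamma^G(g)=0$ as soon as $g$ is not $G$-conjugate into $H$, i.e. $g\notin\bigcup_{x\in G}H^x$. For the ``in particular'' clause, if $g\in H^x$ for some $x\in G$, then the image of $g$ in $G/\zent G$ lies in $(H/\zent G)^{x\zent G}$, a $p$-group; since $g\notin\zent G$ but the coset $g\zent G$ has order coprime to $p$, this is impossible, so $g$ lies in no conjugate of $H$ and the first assertion gives $\gamma^G(g)=0$.

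For (ii), write $\gamma^G=\sum_{\chi\in\Irr(G)}a_\chi\chi$ with $a_\chi=[\gamma^G,\chi]\in\Z_{\geq 0}$; whenever $a_\chi\neq 0$ the hypothesis forces $\chi(1)=D$ and hence $\chi(g)\in\{\al,\al^*\}$, and $\sum_{\chi(1)=D}a_\chi=\gamma^G(1)/D=[G:H]\gamma(1)/D>0$. If $\al=\al^*$, then every $\chi$ with $a_\chi\neq 0$ has $\chi(g)=\al$, so $0=\gamma^G(g)=\al\sum_{\chi(1)=D}a_\chi$ forces $\al=0$, giving $\al+\al^*=0$. Assume now $\al\neq\al^*$ and set $A=\sum_{\chi(1)=D,\,\chi(g)=\al}a_\chi$ and $B=\sum_{\chi(1)=D,\,\chi(g)=\al^*}a_\chi$, so $A,B$ are non-negative integers with $A+B=[G:H]\gamma(1)/D>0$, and $\gamma^G(g)=0$ reads $\al A+\al^*B=0$. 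Then $A,B>0$: for instance $A=0$ would force $\al^*=0$ (since $B>0$), whence $\al=0=\al^*$ because $\al$ and $\al^*$ have the same minimal polynomial — a contradiction; similarly $B>0$. Both $\al$ and $\al^*$ are then values of irreducible characters of $G$, hence lie in the cyclotomic field $\Q(e^{2\pi i/|G|})$, which is Galois over $\Q$; choose $\tau$ in its Galois group over $\Q$ with $\tau(\al)=\al^*$, and recall $\tau$ permutes $\Irr(G)$ by $\chi\mapsto\chi^\tau$ (with $\chi^\tau(h)=\tau(\chi(h))$), preserving degrees. Pick $\chi_0\in\Irr(G)$ with $\chi_0(1)=D$ and $\chi_0(g)=\al^*$ (possible as $B>0$); then $\chi_0^\tau$ has degree $D$, so $\tau(\al^*)=\chi_0^\tau(g)\in\{\al,\al^*\}$, and $\tau(\al^*)=\al^*$ is impossible since it would yield $\tau(\al)=\al^*=\tau(\al^*)$, i.e. $\al=\al^*$. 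Hence $\tau(\al^*)=\al$; applying $\tau$ to $\al A+\al^*B=0$ (the integers $A,B$ are $\tau$-fixed) gives $\al^*A+\al B=0$, and subtracting, $(\al-\al^*)(A-B)=0$, so $A=B$. Then $(\al+\al^*)A=0$ with $A>0$ gives $\al+\al^*=0$.

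For (iii), since $p\nmid|\zent G|$ and $H/\zent G$ is a Sylow $p$-subgroup of $G/\zent G$, we have $|H|=|\zent G|\cdot|G|_p$, so the Sylow $p$-subgroup $P$ of the nilpotent group $H$ has order $|G|_p$ and hence $P\in\Syl_p(G)$, $H=P\times\zent G$, with $\zent G$ the $p$-complement of $H$; moreover $\gamma=\gamma|_P\boxtimes\gamma|_{\zent G}$ (as $\gamma\in\Irr(P\times\zent G)$) and $\gamma|_{\zent G}$ is linear. For any $\chi\in\Irr(G)$, the elements of the central subgroup $\zent G$ act as scalars, so $\chi|_{\zent G}=\chi(1)\lambda_\chi$ for a single linear $\lambda_\chi\in\Irr(\zent G)$; therefore, in the decomposition of $\chi|_H$ over $P\times\zent G$ only constituents of the form $\mu\boxtimes\lambda_\chi$ appear, with $[\chi|_H,\mu\boxtimes\lambda_\chi]=[\chi|_P,\mu]$. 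Consequently $[\gamma^G,\chi]=[\gamma,\chi|_H]$ is zero unless $\lambda_\chi=\gamma|_{\zent G}$, in which case it equals $[\chi|_P,\gamma|_P]$. Thus the irreducible constituents of $\gamma^G$ are exactly the $\chi\in\Irr(G)$ lying above both $\gamma|_P$ and $\gamma|_{\zent G}$, and by hypothesis these all share the degree $D$, which is the claim.

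The only genuinely delicate point I anticipate is in (ii): one should track the total multiplicities $A$ and $B$ rather than attempt a character-by-character Galois matching (which would require $a_{\chi^\tau}=a_\chi$ and need not hold), so that $A$ and $B$ are rational integers and therefore fixed by $\tau$; and one must close the loop by realizing $\al^*$ as a value $\chi_0(g)$ of a degree-$D$ irreducible character and feeding it back through the hypothesis, forcing $\tau$ to interchange $\al$ and $\al^*$.
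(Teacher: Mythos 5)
Your proposal is correct and follows essentially the same route as the paper: parts (i) and (iii) are the standard induced-character and central-character computations the paper dismisses as obvious, and your part (ii) is the same Galois argument (a field automorphism swapping $\al$ and $\al^*$ applied to $\gamma^G(g)=0$, with the integer multiplicities fixed), differing only in that you explicitly rule out the degenerate case where only one of the two values is attained, which the paper absorbs into its choice $1\leq j\leq k-1$.
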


\begin{proof}
(i) and (iii) are obvious. 

For (ii), write $\gamma^G = \sum^k_{i=1}\chi_i$ with $\chi_i \in \Irr(G)$ of degree $D$. By the assumption, $\chi_i(g) = \al$ or $\al^*$. 
Now if $\al=\al^*$, then $0=\gamma^G(g) = k\al$ and so $\al=0$. We may now assume that $\al \neq \al^*$ and that
\begin{equation}\label{eq:qs1}
  \chi_{1}(g) = \ldots = \chi_{j}(g) = \al,~\chi_{j+1}(g) = \ldots = \chi_k(g) = \al^*,
\end{equation}  
for some $1 \leq j \leq k-1$. Since the set of $\chi \in \Irr(G)$ of given degree $D$ is 
stable under the action of $\Gamma := {\rm {Gal}}(\Q_{|G|}/\Q)$, the set $\{\al,\al^*\}$ is $\Gamma$-stable, and  there is some $\sigma \in \Gamma$ that sends $\al$ to $\al^*$, whence $\sigma(\al^*) = \al$. Now, by \eqref{eq:qs1} we have 
$$j\al+(k-j)\al^* = \gamma^G(g) = 0 = \sigma(0) = \sigma(\gamma^G(g)) = j\al^*+(k-j)\al,$$
whence $k(\al+\al^*) = (j\al+(k-j)\al^*)+(j\al^*+(k-j)\al) = 0$ and $\al+\al^* = 0$ as stated.
\end{proof}

\begin{proof}[Proof of Theorem \ref{qs}]
Assume the contrary: all irreducible constituents of $\gamma^G$ have the same degree $D$. Again write 
\begin{equation}\label{eq:qs2}
  \gamma^G = \sum^k_{i=1}\chi_i,
\end{equation}
with $\chi_i \in \Irr(G)$ of degree $D$. Denoting $\lam := \gamma|_{\zent G}$, we see that all $\chi_i$ in \eqref{eq:qs2} lie above $\lam$. 
Modding out the quasisimple group $G$ by $\Ker(\lam)$, we may therefore assume that all $\chi_i$ are faithful characters of $G$.

We will analyze all possible cases for $S = G/\zent G$.
We will use the notation $b5 = (-1+\sqrt{5})/2$ and $b7 = (-1+\sqrt{-7})/2$ of \cite{ATLAS}, and freely use the character tables of $G$ as listed in
\cite{ATLAS}. Also, $\reg_P$ will denote the regular character of $P \in \Syl_p(G)$.


\smallskip
{\it Case 1:} $S = \mathrm{Alt}(5)$. Then we have $p=2$ by hypothesis. Taking $g \in G$ of order $5$, we see that $g$ fulfills the conditions of 
\ref{key}(ii): indeed, 
\begin{equation}\label{eq:qs3}
\al = \left\{ \begin{array}{rl}
       j, & D \equiv j (\mod 5) \mbox{ with }j \in \{0,\pm 1\},\\ 
       b5, &  D \equiv 2 (\mod 5),\\ 
       -b5, & D \equiv 3 (\mod 5).\end{array} \right.
\end{equation}
By Lemma \ref{key} applied to $g$ we have that $\gamma^G(g) = 0$ and $\al+\al^*=0$. As $b5+b5^* = -1$, we conclude that
$5|D$, and so in fact $D=5$ and $\zent G = 1$. In this case, $\chi_i(h) = -1 \neq 0$ for an element $h \in G$ of order $3$, contradicting
Lemma \ref{key} applied to $h$.

\smallskip
{\it Case 2:} $S = \PSL(2,7)$. Taking $g \in G$ of order $7$, we see that $g$ fulfills the conditions of 
\ref{key}(ii) with 
\begin{equation}\label{eq:qs4}
\al = \left\{ \begin{array}{rl}
       j, & D \equiv j (\mod 7) \mbox{ with }j \in \{0,\pm 1\},\\ 
       b7, &  D \equiv 3 (\mod 7),\\ 
       -b7, & D \equiv 4 (\mod 7).\end{array} \right.
\end{equation}
Suppose first that $p \neq 7$. By Lemma \ref{key} applied to $g$ we have that $\gamma^G(g) = 0$ and $\al+\al^*=0$. As $b7+b7^* = -1$, we conclude that
$7|D$, and so in fact $D=7$ and $\zent G = 1$. In this case, $\chi_i(h) = -1 \neq 0$ for an element $h \in G$ of order $2$, contradicting
Lemma \ref{key} applied to $h$.

Assume now that $p=7$. Then $H = P \times \zent G$ with $P \in \Syl_p(G)$. 
If $\zent G = 1$, then there are characters in $\Irr(G)$ of degree $7$ and degree $8$,
each containing $\reg_P$ on restriction to $P$. If $|\zent G| = 2$, then there exist $\theta_i \in \Irr(G)$, $i = 1,2,3$, with 
$\theta_1$ of degree $8$ containing $\reg_P$, $\theta_2$ of degree $6$ containing $\reg_P-1_P$, and 
$\theta_3$ of degree $4$ containing $1_P$. This is impossible by Lemma \ref{key}(iii).

\smallskip
{\it Case 3:} $S = \mathrm{Alt}(6)$. Then we have $p=2$ by hypothesis. Taking $g \in G$ of order $5$, we see that $g$ fulfills the conditions of 
\ref{key}(ii) with $\al$ specified in \eqref{eq:qs3}.
By Lemma \ref{key} applied to $g$ we have that $\gamma^G(g) = 0$ and $\al+\al^*=0$. As $b5+b5^* = -1$, we conclude that $5|D$; 
in particular, $|\zent G| \leq 3$.

Assume $\zent G = 1$, so that $D \in \{5,10\}$. If $D = 10$, then $\chi_i(h) = 1 \neq 0$ for an element $h \in G$ of order $3$, contradicting
Lemma \ref{key} applied to $h$. Suppose $D=5$. Then $\chi_i \in \{\theta,\theta^*\}$ for all $\chi_i$ in \eqref{eq:qs2}, 
where $\theta(1)=\theta^*(1) = 5$ and 
$$(\theta(x),\theta^*(x)) = (2,-1),~~(\theta(y),\theta^*(y)) = (-1,2)$$
for some elements $x,y \in G$ of order $3$. Without loss we may assume that
$$\chi_1 = \ldots = \chi_j = \theta,~\chi_{j+1} = \ldots = \chi_k = \theta^*$$
for some $1 \leq j \leq k$. Applying Lemma \ref{key} to $x$ and to $y$, we obtain
$$2j+(k-j)(-1) = j(-1)+2(k-j) = 0,$$
a contradiction since $k \geq 1$.

If $|\zent G| = 2$, then $D = 10$, and $\chi_i(h) = 1 \neq 0$ for an element $h \in G$ of order $3$, contradicting
Lemma \ref{key} applied to $h$.

Assume now that $|\zent G| = 3$. Then $D = 15$ and $H = P \times \zent G$ with $P \in \Syl_2(G)$. However, a faithful character in $\Irr(G)$ of degree $9$ contains $\reg_P$ on restriction to $P$ and so must occur in $\gamma^G$, a contradiction.

\smallskip
{\it Case 4:} $S = \PSL(3,4)$. Taking $g_5 \in G$ of order $5$, we see that $g_5$ fulfills the conditions of 
\ref{key}(ii) with $\al$ specified in \eqref{eq:qs3}. Taking $g_7 \in G$ of order $7$, we see that $g_7$ fulfills the conditions of 
\ref{key}(ii) with $\al$ specified in \eqref{eq:qs4}. Thus if $p \neq 5$ then by Lemma \ref{key} applied to $g_5$ we have that $\gamma^G(g_5) = 0$ and 
$\al+\al^*=0$. As $b5+b5^* = -1$, we conclude that $5|D$. Likewise, if $p \neq 7$ then Lemma \ref{key} applied to $g_7$ yields that $7|D$.

Now if $p \neq 5,7$, then we have that $35|D$; in particular, $|\zent G| \leq 2$. If $|\zent G| = 1$, then $D = 35$, and $\chi_i(g_2) = 3 \neq 0$ and 
$\chi_i(g_3) = -1 \neq 0$ for an element $g_2 \in G$ of order $2$ and an element $g_3 \in G$ of order $3$. This contradicts
Lemma \ref{key} applied to $g_3$ when $p \neq 3$ and to $g_2$ when $p \neq 2$. Likewise,  if $|\zent G| = 2$, then $D = 70$, and 
$\chi_i(g_2) = -2 = \chi_i(g_3)$ for an element $g_2 \in G$ of order $2$ and an element $g_3 \in G$ of order $3$, again a contradiction.

Assume now that $p = 5$ or $7$, whence $H = P \times \zent G$. In each of these cases, one can find two faithful characters in $\Irr(G)$ of distinct 
degrees that both contain $\reg_P$ on restriction to $P$, contradicting Lemma \ref{key}(iii).

\smallskip
{\it Case 5:} $S = \PSU(4,3)$. Taking $g_5 \in G$ of order $5$, we see that $g_5$ fulfills the conditions of 
\ref{key}(ii) with $\al$ specified in \eqref{eq:qs3}. Taking $g_7 \in G$ of order $7$, we see that $g_7$ fulfills the conditions of 
\ref{key}(ii) with $\al$ specified in \eqref{eq:qs4}. Thus if $p \neq 5$ then by Lemma \ref{key} applied to $g_5$ we have that $\gamma^G(g_5) = 0$ and 
$\al+\al^*=0$. As $b5+b5^* = -1$, we conclude that $5|D$. Likewise, if $p \neq 7$ then Lemma \ref{key} applied to $g_7$ yields that $7|D$.

Now if $p \neq 5,7$, then we have that $35|D$. In all of these cases, one of the following holds:
\begin{enumerate}
\item[(3.5.a)] One can find a $p'$-element $h \in G \smallsetminus \zent G$ and $\beta \neq 0$ such that $\chi_i(h) = \beta$ for all $\chi_i$ occurring in
\eqref{eq:qs2}. 
\item[(3.5.b)] One can find two $p'$-elements $h,h' \in G \smallsetminus \zent G$ and pairs $(\beta_1,\beta_1') \in \C^2$ and 
$(\beta_2,\beta_2') \in \C^2$ such that $(\chi_i(h),\chi_i(h')) = (\beta_1,\beta_1')$ or $(\beta_2,\beta_2')$ for all $\chi_i$ occurring in
\eqref{eq:qs2}. Furthermore, the system of equations $x_1\beta_1+x_2\beta_2=x_1\beta'_1+x_2\beta'_2 = 0$ have only one solution
$x_1=x_2=0$.
\end{enumerate}
Certainly, (3.5.a) contradicts Lemma \ref{key}(iii). In the case of 
(3.5.b), if we let $x_1$ be the number of $\chi_i$ in \eqref{eq:qs2} with $(\chi_i(h),\chi_i(h')) = (\beta_1,\beta_1')$ and $x_2$ the number of the 
remaining $\chi_i$, then by Lemma \ref{key}(i) we must have 
$$x_1\beta_1+x_2\beta_2=\gamma^G(h)=0=\gamma^G(h') = x_1\beta'_1+x_2\beta'_2,$$ 
whence $x_1+x_2=k=0$, again a contradiction. 

Assume now that $p = 5$ or $7$, whence $H = P \times \zent G$. In each of these cases, one can find two faithful characters in $\Irr(G)$ of distinct 
degrees that both contain $\reg_P$ on restriction to $P$ (in fact, they can be chosen to have $p$-defect $0$, unless $\zent G = 12_2$ 
in the notation of \cite{ATLAS}). This contradicts Lemma \ref{key}(iii).
\end{proof}

\begin{rem}
{\em Note that Theorem \ref{qs} does not hold when $(S,p) = (\mathrm{Alt}(5),5)$ and $(\mathrm{Alt}(6),3)$, even with $\gamma \in \Irr(H)$ assumed to be linear.
Indeed, taking $H = P \times \zent G$ with $P \in \Syl_p(G)$, $\gamma|_P = 1_P$, and $\gamma|_{\zent G}$ to be faithful, 
we have $\gamma^G = 2\chi$ for some irreducible character $\chi$ 
of degree $6$ of $G = 2\mathrm{Alt}(5)$ in the former case, and $\gamma^G = 2(\chi+\chi')$ for some irreducible characters $\chi$ and $\chi'$ 
of degree $10$ of $G = 2\mathrm{Alt}(6)$ in the latter case.
}
\end{rem}

\section{Induction and Central Products}
Suppose that the finite group $E$ is the central product of subgroups
$X_1, \ldots, X_n$.  By this we mean that $X_i \le E$, $[X_i,X_j]=1$
for $i \ne j$, $Z=\bigcap_{j=1}^n X_i$, and $E/Z = (X_1/Z) \times
\cdots \times (X_n/Z)$, that is, $(\prod_{j\ne i} X_j) \cap X_i=Z$ for
all $i$.  We fix $\lambda \in \mathrm{Irr}(Z)$.

Suppose that $\chi_i$ is a character of $X_i$ all of whose irreducible
constituents lie over $\lambda$. We claim that there is a unique
character $\chi_1 \cdot \ldots \cdot \chi_n$ of $E$, all of whose
irreducible constituents lie over $\lambda$, such that
\[
(\chi_1 \cdot \ldots \cdot \chi_n)(x_1 \cdots x_n)=
\chi_1(x_1) \cdots  \chi_n(x_n)
\]
for $x_i \in X_i$.

Let $E^*=X_1 \times \cdots \times X_n>Z\times \cdots\times Z$ and
let 
\[
A=\{(x_1, \ldots, x_n) \in Z \times \cdots \times Z |\ x_1
\cdots x_n=1\}.
\] 
Then $E^*/A$ is naturally isomorphic to $E$, via the
homomorphism $f$ given by $(x_1, \ldots, x_n) \mapsto x_1 \cdots x_n$
with kernel $A$.  Notice that $A$ is contained in the kernel of
$\chi=\chi_1 \times \cdots \times \chi_n$, and therefore $\chi$
naturally corresponds to a unique character $\psi$ of $E$ such that
$\psi(f(g))=\chi(g)$ for $g \in E^*$.  The character $\psi$ is what we
have called $\chi_1 \cdot \ldots \cdot \chi_n$.

Furthermore, by \cite[Theorem 10.7]{N2}, the map
\begin{equation}\label{eq:central_character}
  \mathrm{Irr}(X_1|\lambda) \times \cdots \times \mathrm{Irr}(X_n|\lambda) 
  \rightarrow \mathrm{Irr}(E|\lambda)
\end{equation}
given by $(\theta_1, \ldots, \theta_n) \mapsto \theta_1 \cdot \ldots
\cdot \theta_n$ is a bijection.

\begin{lem}\label{thislemma}
  Suppose now that we have a subgroup $K$ of $E$ of the form $K=K_1
  \cdots K_n$, where $Z \le K_i \le X_i$.  Suppose that $\gamma_i \in
  \mathrm{Irr}(K_i|\lambda)$. Then
  \[
  (\gamma_1 \cdot \ldots \cdot \gamma_n)^E=
  (\gamma_1)^{X_1} \cdot \ldots \cdot (\gamma_n)^{X_n} \, .
  \]
\end{lem}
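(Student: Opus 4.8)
The plan is to reduce the identity to the ``unfolded'' direct product $E^* = X_1 \times \cdots \times X_n$, where induction of characters of external direct products is completely transparent, and then push everything back down through the quotient map $f$ with kernel $A$. First I would set $K^* = K_1 \times \cdots \times K_n \le E^*$ and observe that $f(K^*) = K_1 \cdots K_n = K$, with $\Ker(f) = A \le Z \times \cdots \times Z \le K^*$; moreover $A$ lies in the kernel of $\gamma_1 \times \cdots \times \gamma_n$ (since each $\gamma_i$ restricts to a multiple of $\lambda$ on $Z$, exactly as for the $\chi_i$ in the discussion preceding the lemma), so $\gamma_1 \times \cdots \times \gamma_n$ descends to the character $\gamma_1 \cdot \ldots \cdot \gamma_n$ of $K$. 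The basic fact I would invoke is that inflation commutes with induction: if $N \nor G$, $N \le L \le G$, and $\mu$ is a character of $L/N$, then $(\tilde\mu)^G = \widetilde{(\mu^{L/N})}$, where tildes denote inflation to $G$ and to $G/N$ respectively. Applying this with $G = E^*$, $N = A$, $L = K^*$ identifies $(\gamma_1 \cdot \ldots \cdot \gamma_n)^E$, viewed through $E^* / A \cong E$, with the inflation to $E^*$ of $(\gamma_1 \times \cdots \times \gamma_n)^{E^*}$ followed by descent modulo $A$.

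Next I would use the standard identity for induction in a direct product: for $K_i \le X_i$ and $\gamma_i \in \Irr(K_i)$,
\[
(\gamma_1 \times \cdots \times \gamma_n)^{X_1 \times \cdots \times X_n} = \gamma_1^{X_1} \times \cdots \times \gamma_n^{X_n},
\]
which is immediate from the formula for induced characters (or from Frobenius reciprocity applied coordinatewise). So, back in $E^*$, the character $(\gamma_1 \cdot \ldots \cdot \gamma_n)^E$ corresponds to $\gamma_1^{X_1} \times \cdots \times \gamma_n^{X_n}$. It remains to check that $A$ lies in the kernel of this character and that the descended character is exactly $\gamma_1^{X_1} \cdot \ldots \cdot \gamma_n^{X_n}$: the former holds because every $\gamma_i^{X_i}$, having all its irreducible constituents over $\lambda$ (as $\gamma_i \in \Irr(K_i \mid \lambda)$ and $Z \le K_i$, so $Z \le \zent{X_i}$ forces constituents of $\gamma_i^{X_i}$ to lie over $\lambda$), restricts on $Z$ to a multiple of $\lambda$; the latter is just the defining property of the dot product recalled before the lemma, since for $x_i \in X_i$ we have $(\gamma_1^{X_1} \cdot \ldots \cdot \gamma_n^{X_n})(x_1 \cdots x_n) = \prod_i \gamma_i^{X_i}(x_i)$, which is precisely the value of the descent of $\gamma_1^{X_1} \times \cdots \times \gamma_n^{X_n}$.

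The main obstacle I anticipate is purely bookkeeping rather than conceptual: one must be careful that ``induction'' on the left-hand side is induction from $K$ to $E$ inside the central product, not from $K^*$ to $E^*$, and that the isomorphism $E^*/A \cong E$ carries $K^*/A$ onto $K$ (not merely into it) — this uses $Z \le K_i$ for every $i$, so that $A \le K^*$ and $|K^*/A| = |K_1 \cdots K_n| \cdot |Z|^{n-1} / |Z|^{n-1}$ computes correctly. A secondary point to verify cleanly is that the constituents of $\gamma_i^{X_i}$ all lie over $\lambda$, so that the right-hand dot product $\gamma_1^{X_1} \cdot \ldots \cdot \gamma_n^{X_n}$ is even defined; this follows because $Z$ is central in $X_i$ and $\gamma_i$ lies over $\lambda$, so by Clifford theory every irreducible constituent of $\gamma_i^{X_i}$ restricts to $Z$ as a multiple of $\lambda$. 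With these checks in place, the three displayed identities chain together to give the claim.
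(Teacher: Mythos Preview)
Your proposal is correct and follows essentially the same approach as the paper: lift to the direct product $E^* = X_1 \times \cdots \times X_n$ via $K^* = K_1 \times \cdots \times K_n$, use that passing to the quotient by $A$ commutes with induction, and reduce to the standard identity $(\gamma_1 \times \cdots \times \gamma_n)^{E^*} = \gamma_1^{X_1} \times \cdots \times \gamma_n^{X_n}$. The paper's proof is terser---it packages your inflation/induction and kernel checks into the single phrase ``the map \eqref{eq:central_character} commutes with induction''---but your more explicit verification of the bookkeeping (that $A \le K^*$, that constituents of $\gamma_i^{X_i}$ lie over $\lambda$, etc.) is exactly what underlies that statement.
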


\begin{proof}
  Again, let $E^*$, $A$ and $f:E^*\mapsto E$ as before.  Let $K^*=K_1
  \times \cdots \times K_n$, so $A<K^*$ and $K^*/A\simeq
  f(K^*)=K$. Since the map defined in \eqref{eq:central_character}
  commutes with the induction of characters, it is enough to check
  that
  \[
  (\gamma_1 \times \cdots \times \gamma_n)^{X_1 \times \cdots \times X_n}=
  \gamma_1^{X_1} \times \cdots \times \gamma_n^{X_n},
  \]
  but this is an easy exercise. 
\end{proof}

\section{Proof of Theorem A}

\medskip

In order to prove Theorem A, we shall use the following.

\begin{thm}\label{thm}
Suppose that $H$ is a nilpotent subgroup of $G$,
and $N \nor G$ is nilpotent. If $\gamma \in \irr H$ is such that
$\gamma^{HN} \in \irr{HN}$, then $HN$ is nilpotent.
\end{thm}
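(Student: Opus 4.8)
The plan is to reduce to the case where $N$ is a $q$-group for a single prime $q$, and then exploit the assumption that $H$ is nilpotent to separate the $q$-part and $q'$-part of $H$. Write $H = Q \times L$ where $Q \in \Syl_q(H)$ and $L$ is the $q'$-Hall subgroup of $H$. First I would note that $HN$ is nilpotent if and only if $L$ centralizes $N$ (since then $HN = (QN) \times L$ with $QN$ a $q$-group and hence nilpotent, and conversely if $HN$ is nilpotent its normal Hall $q$-subgroup contains $N$ and centralizes the $q'$-part of $HN$, which contains $L$). So the goal is to show $[L,N]=1$. To use the hypothesis $\gamma^{HN} \in \Irr(HN)$, write $\gamma = \varphi \times \psi$ with $\varphi \in \Irr(Q)$, $\psi \in \Irr(L)$. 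Since $L$ is a $q'$-group and $N$ is a $q$-group, $L \cap N = 1$ and $LN$ has $N$ as a normal Sylow $q$-subgroup; the key point will be to analyze the $LN$-structure of $\gamma^{HN}$ restricted appropriately.

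The main mechanism I expect to use: since $N \nor HN$, by Clifford theory $(\gamma^{HN})|_N$ is a sum of $HN$-conjugates of the irreducible constituents of $\gamma|_{H \cap N} = \gamma|_{Q \cap N}$ (as $L \cap N = 1$, we have $H \cap N = Q \cap N \le Q$). Because $\gamma^{HN}$ is irreducible, the stabilizer in $HN$ of any constituent $\theta \in \Irr(N)$ lying under $\gamma^{HN}$ must meet $H$ in the stabilizer $H_\theta$, and irreducibility forces $\gamma$ to be induced from $H_\theta$-data in a controlled way; more concretely, the standard fact that if $\gamma^{HN}$ is irreducible then so is $\gamma^{H_\theta N}$ and $\theta$ extends or behaves compatibly. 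I would pin down that $L$ must stabilize $\theta$ — otherwise the orbit of $\theta$ under $L$ would be too large and $\gamma^{HN}$ could not be irreducible of the required degree. This is where the coprimeness ($|L|$ prime to $q = |N|$) is essential: it lets me use that $L$ acts coprimely on $N$, so $\C_N(L) = [N,L] \cap \C_N(L)$-type arguments and Glauberman-style correspondences are available. Once $L$ stabilizes every $\theta$ under $\gamma^{HN}$, and $L$ acts coprimely on $N$, I would argue that $L$ in fact centralizes $N$: the coprime action of $L$ on the $q$-group $N$ fixing all characters lying under an irreducible induced character, combined with $\gamma^{HN}$ irreducible, should force the action to be trivial.

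The hard part will be making the last step rigorous: going from "$L$ fixes the relevant characters of $N$" to "$L$ centralizes $N$", and handling the interaction with $Q$ (which genuinely does not centralize $N$ in general). I would handle this by induction on $|HN|$, peeling off a proper normal subgroup of $N$ that is $L$-invariant (e.g. a chief factor of $HN$ inside $N$), applying the inductive hypothesis to the quotient and to subgroups, and then using that a coprime action trivial on every chief factor of a $q$-group is trivial. A subtlety to watch: one must ensure at each inductive step that the induced-character hypothesis survives passage to quotients and to the relevant subgroups of the form $H_\theta N$, which follows from the Clifford correspondence ($\gamma^{HN}$ irreducible forces $\gamma^{H_\theta N}$ irreducible and then induction from $H_\theta N$ to $HN$ is irreducible). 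Once $[L,N]=1$ is established, $HN = L \times QN$ is nilpotent and we are done.
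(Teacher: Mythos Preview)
The paper does not prove this statement; it simply cites Corollary~2.3 of \cite{N1}. So there is no in-paper argument to compare against, and you are attempting a self-contained proof where the authors defer to the literature. Your reductions are the right ones and match the shape of Navarro's argument: work inside $HN$, reduce to $N$ a $q$-group (each Sylow subgroup of $N$ is normal in $G$ and $\gamma$ still induces irreducibly to $H$ times that Sylow), write $H=Q\times L$ with $Q\in\Syl_q(H)$, and observe that $HN$ is nilpotent if and only if $[L,N]=1$.

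The proposal has two genuine gaps, however. First, the assertion that ``$L$ must stabilize $\theta$, otherwise the orbit of $\theta$ under $L$ would be too large and $\gamma^{HN}$ could not be irreducible of the required degree'' is not backed by any actual degree count. Clifford theory gives $\chi(1)=et\,\theta(1)$ with $e^2t\le |HN:N|$, but nothing here forces $L\le I_{HN}(\theta)$; the orbit of $\theta$ can perfectly well have nontrivial $L$-contribution without any numerical contradiction. Second, and more seriously, your inductive mechanism requires the irreducible-induction hypothesis to descend to the quotient $HN/M$ for a chief factor $M\le N$. Passage to the subgroup $HM$ is fine and does yield $[L,M]=1$. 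But to iterate you need some $\bar\gamma\in\Irr(HM/M)$ inducing irreducibly to $HN/M$, and neither $\gamma$ nor $\gamma^{HM}$ need be trivial on $M$ (or on $H\cap M$). Your appeal to ``the Clifford correspondence'' does not fix this: the Clifford correspondent of $\chi$ over a constituent of $\Irr(M)$ lives in the inertia group $I_{HN}(\theta)$, which need not contain $N$ (since $N$ need not fix $\theta$), so the setup ``nilpotent subgroup times normal nilpotent subgroup'' is not visibly preserved. This passage-to-quotients step is exactly where the real content of the proof in \cite{N1} lies, and your outline does not supply it.
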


\begin{proof}
This is Corollary 2.3 of \cite{N1}.
\end{proof}

Next we prove our main result.

\begin{thm}\label{thethm}
Let $G$ be a finite group and let $H$ be a nilpotent subgroup
of $G$. Suppose that $\gamma \in \irr H$ is such that $\gamma^G \in \irr G$.
Then ${\bf F^*}(G)={\bf F}(G)$.
\end{thm}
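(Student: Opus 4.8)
The plan is to reduce Theorem \ref{thethm} to the situation already handled by Theorems C, \ref{almostsimple}, \ref{qs}, and \ref{thm}. Suppose, for contradiction, that $M := \F^*(G)$ is not nilpotent, i.e.\ $M \ne \F(G)$; then $\bar M := M/\F(G)$ is a nontrivial direct product of nonabelian simple groups, the components of $G$ modulo $\F(G)$. First I would replace $G$ by $HM$: since $H$ is nilpotent and $\gamma^G$ is irreducible, $\gamma^{HM}$ is irreducible (induction is transitive, and a constituent of an irreducible induced character from a subgroup is the whole thing), and $\F^*(HM) \supseteq M$ is still non-nilpotent, so it suffices to derive a contradiction under the extra hypothesis $G = HM$. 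Next, modding out by $\F(G) \cap H'$-type normal subgroups and using Theorem \ref{thm} with $N = \F(G)$: because $\gamma^{HN}$ is a constituent of $\gamma^G$ hence irreducible, $HN$ is nilpotent, so $H \ge N = \F(G)$ (a maximal nilpotent subgroup containing a normal nilpotent subgroup, or rather $HN$ nilpotent forces $N \subseteq H$ after noting $H$ maximal — more carefully, $HN$ nilpotent and $H \le HN$ with $H$ containing... here one simply gets $N \subseteq HN$ nilpotent and then enlarges $H$; in the maximal-$H$ formulation of Theorem A this is automatic). So we may assume $\F(G) \le H$, and then $H/\F(G)$ is a nilpotent subgroup of $G/\F(G)$, with $\gamma$ inducing irreducibly; replacing $G$ by $G/\F(G)$ we reduce to the case $\F(G) = 1$, $M = \F^*(G) = E(G)$ a product of simple components, $G = HM$, and $\cent G M \le M$ forces $\cent G M = \zent{M} = 1$, so $G \hookrightarrow \Aut(M)$.

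The core of the argument is then the case where $M$ is a direct product $S_1 \times \cdots \times S_t$ of nonabelian simple groups permuted transitively by $H$ (if the $H$-orbits on components were more than one, split $M$ as a product over orbits and induct, using Lemma \ref{thislemma} to handle the central/direct product structure of $HM$ — actually here the product is direct since $\F(G)=1$, which is cleaner). Fix $S = S_1$, let $P = H \cap S$, let $X = N_G(S)/\cent G S \le \Aut(S)$ be the almost simple group induced on the first component, and let $Y$ be the image of $N_H(S)$ in $X$; then $Y$ is nilpotent and $Y \cap S$ is the image of $P$. The character-theoretic heart is: writing $\bar\gamma := \gamma|_{\,\cdot}$ and using that $\gamma$ restricted to $M$ is $H$-invariant (since $\gamma^G$ is irreducible, $\gamma|_M$ has a single $G$-orbit of constituents, but actually we need more), one controls $\gamma|_{S_1 \times \cdots \times S_t}$ via the tensor-decomposition of induced characters over direct products; the key point is that $\gamma$ is a character of $H$, $H \cap M = P_1 \times \cdots \times P_t$ with each $P_i \cong P$, and $\gamma^M$ decomposes as a product of the induced characters over the components, whose degrees multiply. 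A counting/degree argument: $\gamma^G(1) = |G:H| = \chi(1)$ for the irreducible $\chi = \gamma^G$, and $\chi|_M$ is a sum of $G$-conjugate irreducibles of $M$, each of which is $\otimes_i \theta_i$ with the $\theta_i$ being $H$-conjugate characters of the $S_i$; comparing with the first-component induced character $\gamma^S$ forces $\gamma^S$ (or rather the induction to $X$ of the corresponding character of $Y \le X$) to be "close to irreducible." This is exactly where Theorems C and \ref{almostsimple} enter: $|Y|^2 < |X|$, equivalently $|Y|^2 < |X|$, bounds how large a nilpotent subgroup can be, and hence bounds $|X:Y|$ from below, which contradicts the divisibility/degree constraints unless $S$ lies in $\mathrm{LIST}$ and $Y \cap S$ is a Sylow $p$-subgroup — and in precisely those residual cases Theorem \ref{qs} says $\gamma^S$ (at the level of the relevant quasisimple cover, to account for the fact that $N_H(S)$ need not split over $P$) has constituents of two different degrees, which propagates to give two constituents of $\gamma^G$ of different degrees, contradicting $\gamma^G \in \irr G$.

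The hard part, and the place requiring the most care, is the bookkeeping that transfers the "two constituents of different degrees" conclusion of Theorem \ref{qs} from the single component $S$ back up to $G$, together with correctly identifying which quasisimple group and which central character $\lambda$ are relevant. The subtlety is that $N_H(S)$ acting on $S$ need not be a subgroup of $\Aut(S)$ splitting off $\cent{}{}$; one must work inside a suitable covering group, i.e.\ pass to $\tilde S = N_M(S)$-by-diagonal data and use the central-product machinery of Section 4. Concretely: $M = S^t$ sits in $G$ with $H$ transitive on the $t$ factors; the stabilizer $G_1$ of $S_1$ has index $t$, $\gamma^G = (\gamma^{G_1})^G$, and one reduces to $G_1$, where $M = S_1 \times (\text{rest})$; iterating or using Lemma \ref{thislemma} one is left with a single almost simple "diagonal" situation $Y \le X$ with $Y$ nilpotent, $Y$ transitive-data trivial, and an irreducible $\delta \in \Irr(Y)$ with $\delta^X$ irreducible — but now $|Y|^2 < |X|$ is not itself a contradiction, so one needs the finer arithmetic: the degree $\delta^X(1) = |X:Y|$ and the structure of $\Irr(X)$ at the relevant primes. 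For $S \notin \mathrm{LIST}$ one shows $|X:Y|$ cannot equal an irreducible degree of $X$ with the required local properties (this is where $2|Y|^2 \le |X|$ versus merely $|Y|^2 < |X|$ is used, giving room), and for $S \in \mathrm{LIST}$ with $Y \cap S$ Sylow, Theorem \ref{qs} finishes it directly. I would expect the $\mathrm{LIST}$-reduction and the verification that $Y \cap S$ being forced to be a full Sylow subgroup (using $2|Y|^2 \le |X|$ fails only there, per Theorem \ref{almostsimple}) to consume most of the genuine work; the outer structure-theory reductions ($G = HM$, $\F(G) = 1$, $\cent G M = 1$) are routine applications of Theorem \ref{thm} and standard generalized-Fitting facts.
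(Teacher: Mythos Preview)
Your overall architecture is right --- reduce to $G = HE$, use Theorem~\ref{thm} to get $\F(G) \le H$, then play the size bound from Theorem~\ref{almostsimple} against a lower bound on $|H|$ coming from irreducibility, and finish off the $\mathrm{LIST}$ residue with Theorem~\ref{qs}. But there is a genuine gap at the step ``replacing $G$ by $G/\F(G)$ we reduce to the case $\F(G) = 1$.'' The character $\gamma$ has no reason to contain $\F(G)$ in its kernel, so there is no character of $H/\F(G)$ that induces irreducibly to $G/\F(G)$; you cannot simply pass to the quotient. The paper never makes this reduction. Instead it proves directly that $\chi|_{\F(G)}$ is homogeneous (so $(G,\F(G),\tau)$ is a character triple) and then uses a character triple isomorphism to a triple with $\F^*$ \emph{central} to extract the crucial inequality
\[
  |H/\F(G)|^2 \ \ge\ |G/\F(G)|
\]
(their Step~4). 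This is the substitute for your failed quotient reduction, and it is the pivot on which the rest of the proof turns.

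Your endgame sketch also drifts from what is actually needed. You describe ``reducing to a single almost simple diagonal situation $Y \le X$ with $\delta^X$ irreducible,'' but the paper does not and cannot reduce to $n=1$; in fact $n>1$ is forced once the inequality above is in hand. The actual numerical step is global: with $B = H \cap T$ the kernel of the $H$-action on $\{S_1,\dots,S_n\}$, Theorem~\ref{almostsimple} gives $|B/\F(G)|^2 \le c^{-n}|T/\F(G)|$, and combining with $|H/B|\cdot|T/\F(G)| = |G/\F(G)|$ and the displayed inequality yields $1 \le c^{-n}\cdot |H/B|$. To close this you must know $|H/B| \le 2^{n-1}$, which requires a separate argument (their Step~7) that $H/B$ is a $p$-group: one shows that a nontrivial $L_1 = $ projection of $(H\cap E)/\zent E$ to $S_1$ forces every prime dividing $|H/B|$ to equal the unique prime dividing $|L_1|$. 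Only then does $c=1$ follow, placing you in the $\mathrm{LIST}$ cases. Finally, to invoke Theorem~\ref{qs} you must also verify that $\gamma|_{H\cap E}$ is irreducible (their Step~10, proved by a minimal-counterexample argument you have not indicated) so that $\gamma|_{H\cap E} = \gamma_1 \cdots \gamma_n$ and Lemma~\ref{thislemma} applies to $(\gamma_1)^{X_1}\cdots(\gamma_n)^{X_n}$.
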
 

\begin{proof}
Let $G$ be a counterexample to Theorem \ref{thethm} such
that 
$|G|$ is as small as possible. Of course, if $H=G$, then $G$ is
nilpotent and there is nothing to prove.  We may assume therefore
that $H$ is a maximal subgroup of $G$ with respect to being
nilpotent.

Since ${\bf F}^{*}(G)$ strictly contains ${\bf F}(G)$, the group $G$
is not only non-solvable but it contains a subnormal quasi-simple
group, say $X$. Let $E$ be the product of all $H$-conjugates of $X$ in
$G$. Since $G$ is a minimal counterexample, we have $G = HE$.

The group $E$ is the layer of $G$ and is a perfect central extension
of a direct product of simple groups which are transitively permuted
by $H$. Thus $E/\zent E$ is a perfect minimal normal subgroup of
$G/\zent E$. Write $E/\zent E$ as $E/\zent E = S_{1} \times \cdots
\times S_{n}$ for some pairwise isomorphic simple groups $S_{i}$ with
$i$ in $\{ 1, \ldots, n \}$ and integer $n$. Put $\chi =\gamma^{G}$.

\medskip
  \Step~~{\sl The irreducible character $\chi$ must be faithful.}
  \label{step:Faithful}
\medskip 

The kernel of $\chi$ is equal to $U = \cap_{x \in G}{(\mathrm{ker}(\gamma))}^{x}$ by \cite[Lemma 5.11]{Isaacs} which is a  
normal subgroup of $G$ contained in $H$.
Thus $U$ is nilpotent.  Now $\gamma$ may be viewed as a character of $H/U$ 
which induces  an irreducible character of $G/U$.
Since $QU/U$ is a component of
$G/U$, it easily follows that $G/U$ is a counterexample to the conjecture 
with order at most $|G|$. This can only happen if $U = 1$.

\medskip
  \Step~~{\sl If $A \nor G$ is nilpotent, then $A \le H$ and
  $\chi_A$ is a multiple of some $\tau \in \irr A$. In particular, if
  $\tau(1)=1$, then     
  $A \sbs \zent G$ is cyclic.  Hence, every normal abelian subgroup
  of $G$ is cyclic and central.}\label{step:NilpNormal}

\medskip 
By Theorem \ref{thm}, we have that $A \le H$ by using that
$H$ is a maximal subgroup of $G$ subject to being nilpotent.  Let
$\tau \in \irr A$ be an irreducible constituent of $\gamma_A$.  Hence
$\tau$ also lies under $\chi$. Let $T$ be the stabilizer of $\tau$ in
$G$.  Since $[E, {\bf F}(G)]=1$ and $A \leq {\bf F}(G)$, we have that
$E \sbs T$.  Now, if $\epsilon \in \irr{T\cap H|\tau}$ is the Clifford
correspondent of $\gamma$ over $\tau$, it follows that
$\epsilon^G=\gamma^G=\chi$ is irreducible. Hence, $\epsilon^T$ is also
irreducible.  Since $X$ is a component of $T$, we would have a
contradiction in case $T<G$.  Thus $T=G$, which means that $\chi_A$ is
a multiple of $\tau$ by Clifford theory. 
Since $\chi$ is faithful by Step \ref{step:Faithful}, so is $\tau$. 
If $\tau$ is linear, then $A \le \zent G$ since $\tau$ and $A$ are 
$G$-invariant. It also follows that $A$ is cyclic. 


\medskip
  \Step~~{\sl We have $\Fit =C_G(E/\zent E)$
  and $\Fit \cap E=\zent E$.}\label{step:Sol=F=CE}
\medskip 

Let $M$ be the largest normal solvable subgroup of $G$. 
Since $\zent E \leq M$ and $E/\zent E$ is a
direct product of non-abelian simple groups, it follows that $M \cap E
= \zent E$. The group $M/\zent E$ is isomorphic to $ME/E$ which is
nilpotent (because it is isomorphic to a subgroup of $H/(H\cap
E)$). Since $\zent E$ is contained in $\zent G$ by Step
\ref{step:NilpNormal}, it follows that $M$ is nilpotent.  Now, it is
clear that $M \leq \Fit\leq C_G(E)$.
Since $C_G(E)/\zent E$ is isomorphic to a subgroup of the nilpotent
group $G/E$, so $C_G(E)$ is a normal solvable subgroup of $G$, which
proves that $C_G(E)\leq M$. Thus, $M=\Fit=C_G(E)$ follows. Finally, 
$C_G(E)\leq C_G(E/\zent E)$ is clear, while
if $g\in C_G(E/\zent E)$, then $[x,y]^g=[x^g,y^g]=[x,y]$ holds for every 
$x,y\in E$, so $g\in C_G(E)$ by using that $E$ is perfect. 

\medskip
  \Step~~{\sl $|H/\Fit|^2\geq |G/\Fit|$.}\label{step:HperF2-GperF}
\medskip 

Write $F=\Fit$. By Step \ref{step:NilpNormal}, we know that $\chi_F$
is a multiple of $\tau \in \irr F$. Now we use the theory of character
triples, as developed in Section 5.4 of \cite{N2}, and with the same
notation.  Let $(G^*,F^*,\tau^*)$ be a character triple isomorphic to
$(G,F, \tau)$, where $F^*$ is central, and $\tau^*$ is faithful. Let
$(H/F)^*=H^*/F^*$.  If $\gamma^* \in \irr{H^*|\tau^*}$ corresponds to
$\gamma$, then we have that $(\gamma^*)^{G^*}=\chi^*$ is
irreducible. (See the discussion before Lemma 5.8 of \cite{N2}.)
Using that $|G^*:F^*|=|G:F|$ and $|H^*:F^*|=|H:F|$ we get that
\[
|G:F|=|G^*:F^*|\geq \chi^*(1)^2=\gamma^*(1)^2\cdot |G^*:H^*|^2= 
\gamma^*(1)^2 \cdot \frac{|G^*:F^*|^2}{|H^*:F^*|^2}
\geq \frac{|G:F|^2}{|H:F|^2},
\]
where the first inequality follows from \cite[Lemma 2.27(f) and
Corollary 2.30]{Isaacs} and from the fact that $F^*$ is central in
$G^*$.

\medskip
  \Step~~{\sl We have that $n > 1$ and $E\cap H\neq \zent E$.}
  \label{step:n>1,L1>1}
\medskip

If $n = 1$, then $G/\Fit$ is almost-simple and
$|H/\Fit|^2 < |G/\Fit|$ by Theorem C contradicting Step \ref{step:HperF2-GperF}.

Suppose now that $E\cap H=\zent E$, so $H\cap (\Fit E)=\Fit$ and 
$E/Z(E)\cong \Fit E/\Fit$. 
By Step \ref{step:Sol=F=CE}, the kernel of the action of $H$ on
$E/Z(E)\simeq S_1\times\ldots\times S_n$ equals $\Fit$, 
so this induces an inclusion of the nilpotent group
$H/\Fit$ into $W = \mathrm{Out}(S_{1}) \wr \mathrm{Sym}(n)$. If $\psi$
denotes the natural map from $W$ to $W/{(\mathrm{Out}(S_{1}))}^{n}$,
then $\psi(H/\Fit)$ may be viewed as a nilpotent subgroup of
$\mathrm{Sym}(n)$. We have $|\psi(H/\Fit)| \leq 2^{n-1}$ by
\cite[Theorem 3]{Dixon}. Thus $|H/\Fit| \leq |{\rm Out}(S_1)|^{n}
\cdot 2^{n-1}$. By a remark after \cite[Lemma 7.7]{GMP}, it follows
that $|{\rm Out}(S_1)| < |S_1|^{1/2}/2$. We conclude that
\[|H/\Fit|^2 < |\mathrm{Out}(S_1)|^{2n} \cdot 2^{2n} \leq |S_1|^n
=|\Fit E/\Fit|\leq |G/\Fit|.\] Again, this contradicts the bound
$|H/\Fit|^2 \geq |G/\Fit|$ obtained in Step \ref{step:HperF2-GperF},
thus $n > 1$ and $E\cap H\neq \zent E$ as claimed.

\medskip
\Step~~{\sl We have $(E \cap H)/\zent E = L_{1} \times \cdots \times
  L_{n}$ for some pairwise isomorphic nilpotent subgroups $L_{i} <
  S_{i}$ with $i$ in $\{ 1, \ldots , n \}$, which are transitively
  permuted by $H$.\label{step:L1xxLn} } 
\medskip

Let $L_{i}$ be the projection of $(E \cap H)/\zent E$ to $S_{i}$ for
$1 \leq i \leq n$.  Since $H/\zent E$ acts transitively on the set $\{
S_{1}, \ldots, S_{n} \}$ by conjugation, it also acts transitively on
the set $\{ L_{1}, \ldots, L_{n} \}$ by conjugation. Thus the groups
$L_{i}$ are pairwise isomorphic and nilpotent for $i$ in $\{ 1, \ldots
, n \}$.

Let $K$ be the preimage of $L_{1} \times \cdots \times L_{n}$ in
$E$. Clearly, $K$ is a nilpotent subgroup of $E$. We claim that $K = E
\cap H$. By Theorem \ref{thm} and the maximality of $H$, it is
sufficient to show that $K$ is normalized by $H$. For this it is
sufficient to see that the preimage $K_1$ of $L_1$ in
$E$ has the property that $K_1^{h} \leq K$ for every $h \in H$. 
But this is clear.

\medskip

The group $H/\zent E$ acts by conjugation on $E/\zent E$. As noted in
the proof of Step \ref{step:L1xxLn}, this action induces a transitive
action on the set $\Omega = \{ S_{1}, \ldots , S_{n} \}$ of simple
direct factors of $E/\zent E$. Let $B/\zent E$ be the kernel of this
action. Thus $B$ is a normal subgroup of $H$ containing $\zent E$ with
the property that $H/B$ can be considered as a transitive subgroup of
$\mathrm{Sym}(\Omega)$.

\medskip
  \Step~~{\sl Both $(E \cap H)/\zent E$ and $H/B$ are $p$-groups for the 
    same prime $p$.}\label{step:EcapH,HperB-p}
\medskip

By Step \ref{step:n>1,L1>1} we know that $n > 1$ and $L_{1} \not= 1$. Consider $H/B$ as a transitive subgroup of $\mathrm{Sym}(\Omega)$. By our assumptions, $H/B$ is a non-trivial nilpotent group. Let $p$ be a prime divisor of $|H/B|$. Since no non-trivial normal subgroup of $H/B$ can stabilize $S_{1} \in \Omega$, the Sylow subgroup ${\bf O}_{p}(H/B)$ of $H/B$ cannot stabilize $S_{1}$. It follows that neither the Sylow subgroup ${\bf O}_{p}(H / \zent E)$ of $H/\zent E$ can stabilize $S_{1}$. Let $x$ be a $p$-element in $H/\zent E$ which does not stabilize $S_{1}$. Then $x$ cannot stabilize $L_{1} \leq (E \cap H)/\zent E$ either. By Step \ref{step:L1xxLn} and its proof, we know that $L_{1} \cap {(L_{1})}^{x} = 1$. Since $H/\zent E$ is nilpotent, this can only occur if $L_{1}$ is a $p$-group. Since $L_{1}$ is a non-trivial $p$-group where $p$ is an arbitrary prime divisor of $|H/B|$, we conclude that both $L_{1}$ and $H/B$ are (non-trivial) $p$-groups for the same prime $p$. The result now follows by Step \ref{step:L1xxLn}.  

\medskip

Let $T$ denote the preimage in $G$ of the kernel of the action of $G/\zent E$ on $\Omega$. So $H \cap T = B$ and $T = BE$. 
Recall that $\mathrm{LIST} = \{ \mathrm{Alt}(5), \mathrm{Alt}(6),
\mathrm{PSL}(2,7), \mathrm{PSL}(3,4), \mathrm{PSU}(4,3) \}$.

Define $c=1$ if $S_1 \in \mathrm{LIST}$, $L_{1}$ is a Sylow $p$-subgroup of $S_1$ and $p =2$ in case $S_{1} = \mathrm{Alt}(5)$ or $\mathrm{Alt}(6)$.
Otherwise, define $c=2$.  

\medskip
  \Step~~{\sl ${|B/\Fit|}^{2} \leq c^{-n} \cdot |T/\Fit|$.}
  \label{step:BperF2-TperF} 
\medskip

By Step \ref{step:Sol=F=CE}, the group $T/\Fit$ is isomorphic to a
subgroup $\overline{T}$ of $\mathrm{Aut}(S_{1}) \times \cdots
\times \mathrm{Aut}(S_{n})$ containing the normal subgroup $S_{1}
\times \cdots \times S_{n}$ and the group $B/\Fit$ may be viewed as a
nilpotent subgroup $\overline{B}$ of $\overline{T}$. We show the claim
by induction on $n$. First, for $n = 1$ the claim follows from Theorem
C if $c=1$, while it follows from Theorem \ref{almostsimple} if
$c=2$. Now, let us assume that $n > 1$ and that the claim is
true for $n-1$. Let $\pi$ be the natural projection of $\overline{T}$ to
$\mathrm{Aut}(S_{1})$. Then $c \cdot {|\pi(\overline{B})|}^{2} \leq
|\pi(\overline{T})|$. Moreover, ${|\mathrm{ker}(\pi) \cap
  \overline{B}|}^{2} \leq c^{-n+1} \cdot |\mathrm{ker}(\pi) \cap
\overline{T}|$ by the fact that the claim is true for
$n-1$. Thus 
\begin{align*}
|B/\Fit|^2 &= |\overline{B}|^2 = |\pi(\overline{B})|^2 \cdot 
            |\mathrm{ker}(\pi) \cap \overline{B}|^2 \leq\\
           &\leq c^{-n} \cdot |\pi(\overline{T})| \cdot |\mathrm{ker}(\pi) \cap 
           \overline{T}| = c^{-n}\cdot|\overline{T}|=c^{-n}\cdot |T/\Fit|.
\end{align*}
   
\medskip
  \Step~~{\sl $S_{1} \in \mathrm{LIST}$ and $L_{1}$ is a Sylow $p$-subgroup of $S_1$. Moreover, $p=2$ in case $S_1$ is $\mathrm{Alt}(5)$ or $\mathrm{Alt}(6)$.} 
\medskip

By Steps \ref{step:n>1,L1>1} and \ref{step:EcapH,HperB-p}, we know that $n > 1$ and $L_1 \not= 1$ is
a $p$-group for some prime $p$. By the definition of $c$, we need
to prove that $c=1$.
We have $|B/\Fit|^2 \leq c^{-n} \cdot |T/\Fit|$ by Step \ref{step:BperF2-TperF}. Thus
\begin{equation}
\label{e1}
|H/\Fit|^2 = |H/B|^2 \cdot |B/\Fit|^2 \leq c^{-n} \cdot |H/B|^2 \cdot |T/\Fit|.
\end{equation}
Since $G=HT$ and $H\cap T=B$, we have 
\begin{equation}
\label{e2}
|H/B| \cdot |T/\Fit| = \frac{|H||T|}{|B||\Fit|} = \frac{|HT|}{|\Fit|} = |G/\Fit|.
\end{equation}
Inequalities (\ref{e1}) and (\ref{e2}) give 
\begin{equation}
\label{e3}
|H/\Fit|^2 \leq c^{-n}\cdot|H/B|\cdot|G/\Fit| \leq c^{-n}\cdot 2^{n-1} \cdot |G/\Fit|
\end{equation}
where the second bound follows from Step \ref{step:EcapH,HperB-p}, noting that $H/B$ is a $p$-subgroup of the symmetric group on $n$ letters and thus it has size at most $2^{n-1}$.
(It is a well-known fact that the $p$-part of $n!$
is at most this number.) Now Step \ref{step:HperF2-GperF} and (\ref{e3}) give
\[
|G/\Fit| \leq |H/\Fit|^2 \leq  c^{-n} \cdot 2^{n-1} \cdot |G/\Fit|
\]
from which $1 \leq c^{-n} \cdot 2^{n-1}$ follows, forcing $c=1$.

\medskip
  \Step~~{\sl The character $\gamma_{H \cap E}$ is irreducible.}\label{step:GammaRestriction}
\medskip

Let $U$ be any proper subgroup of $H$ containing $H \cap E$. We claim that $\mu^{H} \not= \gamma$ for every irreducible character $\mu$ of $U$. For a contradiction assume that $\mu^{H} = \gamma$ for some $\mu \in \mathrm{Irr}(U)$. Since $\gamma^{G}$ is irreducible, $\mu^{EU}$ is also irreducible. 
Since $|EU:U|=|G:H|$
and $|EU| < |G|$, by induction
we will have that ${\bf F^*}(EU)$ is nilpotent, but this
cannot happen.   

Let $H = U_{0} > \ldots > U_{t} = H \cap E$ be a chain of normal subgroup of $H$ with $t \geq 1$ maximal. By repeated application of \cite[Theorem 6.18]{Isaacs} and the claim in the previous paragraph, the character $\gamma_{U_{i}}$ is homogeneous for every index $i$ with $0 \leq i \leq t$. Moreover, since $H$ is nilpotent and $t$ is maximal, $|U_{i}/U_{i+1}|$ is prime for every index $i$ with $0 \leq i \leq t-1$, and so $\gamma_{U_{i}}$ is irreducible for every index $i$ with $0 \leq i \leq t$. In particular $\gamma_{H \cap E}$ is irreducible. 

\medskip

As in the proof of Step \ref{step:L1xxLn}, for every $i$ with $1 \leq i \leq n$, let $K_i \leq H \cap E$ be the preimage of $L_{i} < S_{i} \leq E/\zent E$ in $E$. In particular $L_{i} \cong K_i / \zent E$.

\medskip
  \Step~~{\sl $H \cap E$ is a central product of the subgroups $K_1, \ldots , 
    K_n$ amalgamating $\zent E$.}\label{step:L1Ln-central} 
\medskip

For each $i$ with $1 \leq i \leq n$, the group $K_i$ contains $\zent
E$. By Step \ref{step:L1xxLn}, $H \cap E = \langle K_1, \ldots , K_n
\rangle$. Since every distinct pair of components of $G$ commute, we
have $[ K_i, K_j ] = 1$ for every $i$ and $j$ with $1 \leq i < j \leq
n$. Finally, for every index $i$ with $1 \leq i \leq n$, the
intersection of $K_i$ with $\langle K_1, \ldots , K_{i-1}, K_{i+1},
\ldots K_n \rangle$ is $\zent E$.

\medskip
 {\sl Final Step.} 
\medskip

By Step \ref{step:GammaRestriction}, $\gamma_{H\cap E}\in\irr{H\cap E}$, 
so $\gamma_{H\cap E}=\gamma_1 \cdot \ldots \cdot \gamma_n$ for
$\gamma_i \in \irr{K_i|\lambda}$, where $\chi_{\zent
  E}=\chi(1)\lambda$ by Step \ref{step:L1Ln-central} and by Section 4.

For every $i$ with $1\leq i\leq n$, let $X_i$ be the preimage of 
$S_i\leq E/\zent E$ in $E$, so $E$ is the central product of $X_1,\ldots,X_n$. 
By Mackey and Clifford's theorem, we have that all irreducible constituents of
\[
\chi_{E}=(\gamma_{H\cap E})^{E}=(\gamma_1 \cdot \ldots \cdot \gamma_n)^{E}
\] 
have equal degrees. By Lemma \ref{thislemma}, this character equals
\[
(\gamma_1)^{X_1} \cdot \ldots \cdot (\gamma_n)^{X_n} \, .
\]
For $k>1$, fix an irreducible constituent $\rho_k$ of 
$(\gamma_k)^{X_k}$. By Theorem \ref{qs}, let $\xi_1$ and $\xi_2$  be irreducible
constituents of $(\gamma_1)^{X_1}$ with different degrees. Then 
$\xi_i \cdot \rho_2 \cdot \ldots \cdot \rho_n$ with $i \in \{ 1,2 \}$
are two irreducible constituents of $\chi_{E}$ with different degrees.
This contradiction proves the theorem.
\end{proof}

Notice that Theorem A easily follows from Theorem \ref{thm} and Theorem 
\ref{thethm}.

\end{document}